\documentclass[11pt]{amsart}
\usepackage{amssymb, epsfig}
\textwidth=5in
\textheight=7.5in

\newcommand{\R}{\mathbf{R}}

\newcommand{\p}{\mathbf{P}}
\newcommand{\subi}{_i}
\newcommand{\tensor}{\otimes}

\theoremstyle{plain}
\newtheorem{thm}{Theorem}[section]
\newtheorem{cor}[thm]{Corollary}
\newtheorem{prop}[thm]{Proposition}
\newtheorem{lem}[thm]{Lemma}

\theoremstyle{definition}

\newtheorem{rem}[thm]{Remark}
\newtheorem{defn}[thm]{Definition}
\newtheorem{examp}[thm]{Example}

\renewcommand{\tilde}{\widetilde}

\DeclareMathOperator{\md}{md}
\DeclareMathOperator{\grad}{grad}

\DeclareMathOperator{\Ric}{Ric}

\numberwithin{equation}{section}
\setcounter{tocdepth}{1}
\newcounter{step}

\pagestyle{headings}

\begin{document}

\title{Lorentz and semi-Riemannian spaces with Alexandrov curvature bounds}
\author{Stephanie B. Alexander}
\address{1409 W. Green St., Urbana, Illinois 61801}
\email{sba@math.uiuc.edu}
\author{Richard L. Bishop}
\address{1409 W. Green St., Urbana, Illinois 61801}
\email{bishop@math.uiuc.edu}
\thanks{The research described in this paper was made possible in part
by Award No.  RMI-2381-ST-02
of the U.S. Civilian Research \& Development
Foundation for the Independent States of the Former Soviet Union
(CRDF).
Also, important parts were developed while the first author visited
IHES in Bures-sur-Yvette, France.}
\subjclass{53B30,53C21,53B70}

\begin{abstract}
A semi-Riemannian manifold  is said to satisfy $R\ge K$
(or $R\le K$) if spacelike sectional curvatures are $\ge K$ and
timelike ones are $\le K$ (or the reverse). Such spaces are abundant,
as warped product constructions show; they include, in particular, big
bang Robertson-Walker spaces. By stability, there are many non-warped
product examples. We prove the equivalence of this type of curvature
bound with local triangle comparisons on the signed lengths of
geodesics.  Specifically, $R\ge K$ if and only if locally the signed
length of the geodesic between two points on any geodesic triangle is
at least that for the corresponding points of its model triangle in
the Riemannian, Lorentz or anti-Riemannian plane of curvature $K$ (and
the reverse for $R\le K$). The proof is by comparison of solutions of
matrix Riccati equations for a modified shape operator that is
smoothly defined along reparametrized geodesics (including null
geodesics) radiating from a point.  Also proved are semi-Riemannian
analogues to the three basic Alexandrov triangle lemmas, namely, the
realizability, hinge and straightening lemmas.  These analogues are
intuitively surprising, both in one of the quantities considered, and
also in the fact that monotonicity statements persist even though the
model space may change. Finally, the algebraic meaning of these
curvature bounds is elucidated, for example by relating them to a
curvature function on null sections.
 \end{abstract}

\maketitle

\section{Introduction}
\markboth{\sc Stephanie B. Alexander and Richard L. Bishop}
{\sc Semi-Riemannian spaces with curvature bounds}
\label{sec:intro} 

\subsection{Main theorem}\label{ss:maintheorem}

Alexandrov spaces are geodesic metric spaces with curvature bounds in
the sense of local triangle comparisons.  Specifically, let $S_K$
denote the simply connected 2-dimensional Riemannian space form of
constant curvature $K$.  For curvature bounded below (CBB) by $K$, the
distance between any two points of a geodesic triangle is required to
be $\ge$ the distance between the corresponding points on the
``model'' triangle with the same sidelengths in $S_K$.  For curvature
bounded above (CBA), substitute ``$\le$''.  Examples of Alexandrov
spaces include Riemannian manifolds with sectional curvature $\ge K$
or $\le K$. A crucial property of Alexandrov spaces is their
preservation by Gromov-Hausdorff convergence (assuming uniform
injectivity radius bounds in the CBA case).  Moreover, CBB spaces are
topologically stable in the limit \cite{P}, a fact at the root of
landmark Riemannian finiteness and recognition theorems.  (See Grove's
essay \cite{Ge}.)  CBA spaces are also important in geometric group
theory (see \cite{Gv, BH}) and harmonic map theory (see, for example,
\cite{ GvS, J, EF}).

In Lorentzian geometry, \emph{timelike} comparison and rigidity theory
is well developed.  Early advances in timelike comparison geometry
were made by Flaherty \cite{F}, Beem and Ehrlich \cite{BE}, and Harris
\cite{H1, H2}. In particular, a purely timelike, global triangle
comparison theorem was proved by Harris \cite{H1}.  A major advance in
rigidity theory was the Lorentzian splitting theorem, to which a
number of researchers contributed; see the survey in \cite{BEE}, and
also the subsequent warped product splitting theorem in \cite{AGH}.
The comparison theorems mentioned assume a bound on sectional
curvatures $K(P)$ of timelike 2-planes $P$.  Note that a bound over
\emph{all} nonsingular 2-planes forces the sectional curvature to be
constant \cite{Ki}, and so such bounds are uninteresting.

This project began with the realization that certain Lorentzian
warped products, which may be called Minkowski, de Sitter or anti-de
Sitter cones, possess a global triangle comparison property that is
not just
timelike, but is fully analogous to the Alexandrov one. The
comparisons we mean are on signed lengths of geodesics, where the
timelike sign is taken to be negative. In this paper, \emph{length} of either
geodesics or vectors is always signed, and we will not
talk about the length of nongeodesic curves.  The \emph{model spaces}
are $S_K$, $M_K$ or $-S_K$, where $M_K$ is the simply connected
$2$-dimensional Lorentz space form of constant curvature $K$, and
$-S_K$ is $S_K$ with the sign of the metric switched, a space of
constant curvature $-K$.

The cones mentioned above turn out to have sectional curvature bounds
of the following type.  For any semi-Riemannian manifold, call a
tangent section  \emph{spacelike} if the metric is definite there,
and \emph{timelike} if it is nondegenerate and indefinite.  Write
\emph{$R\ge K$} if spacelike
sectional curvatures are $\ge K$ and timelike ones are $\le K$; for
$R\le K$, reverse ``timelike'' and ``spacelike''.  Equivalently,
$R\ge K$ if the curvature tensor satisfies
\begin{equation}
\label{eq:def}
R(v,w,v,w) \ge
K (\langle v,v \rangle \langle w,w \rangle  - \langle v,w \rangle ^2),
\end{equation}
and similarly  with inequalities reversed.

The meaning of this type of curvature bound is clarified  by noting
that if one has merely  a bound above on timelike sectional
curvatures, or merely a bound below on spacelike ones, then the 
restriction $R_V$ of the sectional curvature
function to any nondegenerate $3$-plane $V$ has a curvature
bound below in
our sense: $R_V\ge K(V)$ (as follows from \cite{BP};  see
\S\ref{sec:algebra} below).  Then $R\ge K$ means that $K(V)$ may be
chosen independently of $V$.

Spaces satisfying
$R\ge K$ (or $R\le K$) are abundant, as
warped product constructions show. They include, for example, the big
bang cosmological models discussed by Hawking and Ellis \cite[p.
134-138]{HE} (see \S\ref{sec:examples} below). Since there are many warped
product examples satisfying
$R\ge K$ for all $K$ in a nontrivial finite interval,
then by stability, there are many non-warped product examples.

Searching the literature for this type of curvature bound, we found it
had been studied earlier by Andersson and Howard \cite{AH}.  Their
paper contains a Riccati equation analysis and gap rigidity theorems.
For example: A geodesically complete semi-Riemannian manifold of
dimension $n\ge 3$ and index $k$, having either $R\ge 0$ or $R\le 0$
and an end with finite fundamental group on which $R\equiv 0$, is
$\R^n_k$ \cite{AH}.   Their method uses parallel hypersurfaces, and
does not concern triangle comparisons or the methods of Alexandrov
geometry. Subsequently, D\'{\i}az-Ramos, Garc\'{\i}a-R\'{\i}o, and
Hervella obtained a volume comparison theorem for ``celestial
spheres'' (exponential images of spheres in spacelike hyperplanes)
in a Lorentz manifold with $R \ge K$ or $R \le K$
\cite{DGH}.

Does this type of curvature bound always imply local triangle
comparisons, or do triangle comparisons only arise in special cones?  In
this paper we prove that curvature bounds $R\ge K$ or
$R\le K$ are actually equivalent to local triangle comparisons. The
existence of model triangles is described in the Realizability
Lemma of \S \ref{sec:modellem}.  It states that any point in $\R^3-(0,0,0)$
represents the sidelengths of a unique triangle in a model space of
curvature $0$, and the same holds for $K\ne 0$ under appropriate \emph{size
bounds for $K$}.

We say $U$ is a
\emph{normal  neighborhood}  if it is a normal coordinate
neighborhood (the diffeomeorphic exponential image of some open
domain in the tangent space) of each of its points. There is a corresponding
distinguished geodesic between any two points of $U$, and the 
following theorem refers to  these geodesics and the triangles they 
form. If in addition the triangles
satisfy size
bounds for $K$, we say $U$ is \emph{normal for $K$}. All geodesics are assumed
parametrized by $[0,1]$, and by
\emph{corresponding} points on two geodesics, we mean
points having the same affine parameter.

\begin{thm}\label{thm:comparison}
If a semi-Riemannian manifold satisfies $R \ge K \,\,(R\le K)$, and
$U$ is a normal neighborhood for $K$, then the signed length of
the geodesic
between two points on any geodesic triangle of  $U$ is at least \,(at most)\,
that for the corresponding points on the model triangle in $S_K$,
$M_K$ or $-S_K$.

Conversely, if triangle comparisons hold in some normal neighborhood
of each point of a semi-Riemannian manifold, then $R \ge K \,\,(R\le K)$.
\end{thm}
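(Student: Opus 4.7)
My plan is to handle both directions via the classical Jacobi/Riccati machinery of Riemannian comparison geometry, but recast through a reparametrization of the pencil of geodesics emanating from one vertex, chosen so that everything remains smooth at the vertex and along null directions. Concretely, for a triangle $\triangle pqr$ in a normal-for-$K$ neighborhood $U$, fix the vertex $p$ and consider the family $\sigma_v(t) = \exp_p(tv)$. The Jacobi tensor $A(t)$ with $A(0) = 0$, $A'(0) = I$ along each $\sigma_v$ governs the variation of $\exp_p$, and the naive shape operator $A' A^{-1}$ of the geodesic ``sphere'' is singular at $t = 0$ and also fails at null directions. Replacing it by a modified shape operator $\tilde S(v,t)$, defined by pulling back along $A(t)/t$ (or an equivalent smooth rescaling), produces a tensor smooth in $(v,t)$ on a neighborhood of the zero section of $TM$, which satisfies a matrix Riccati equation $\tilde S' + \tilde S^2 + \tilde R = 0$, where $\tilde R$ is the correspondingly rescaled curvature. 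The hypothesis $R\ge K$ translates into a tensorial inequality of the form $\tilde R \ge K\cdot\mathrm{id}$, with the caveat that its ``direction'' is dictated by the signature of the ambient 2-planes.

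For the forward direction ($R \ge K$ implies comparison), I would compare one side of $\triangle pqr$, say $qr$, with its model. Parametrize $qr$ as $\gamma(u)$, $u\in[0,1]$, and for each $u$ let $\sigma_u$ be the distinguished geodesic from $p$ to $\gamma(u)$. The signed squared length of $qr$ can be expressed as an integral of $\langle \dot\gamma, \dot\gamma\rangle$, whose second variation along the family $\{\sigma_u\}$ is controlled by $\tilde S$. Invoking a Riccati comparison lemma (symmetric operator version, transplanted to the semi-Riemannian setting via the modified operators) against the analogous $\tilde S_K$ on the appropriate model space, and integrating twice, yields the inequality between the signed length of $qr$ and that of its model counterpart. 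The choice of model space among $S_K$, $M_K$, and $-S_K$ is forced by the causal character of the side $qr$; the Realizability Lemma, applied under the size bounds for $K$, ensures the model triangle exists. Comparisons for arbitrary pairs of ``corresponding points'' then follow by applying the vertex-comparison along the two auxiliary subtriangles cut off by the connecting geodesic.

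For the converse, I would take a point $p$ and vectors $v, w \in T_pM$, form the triangle with vertices $p$, $\exp_p(sv)$, $\exp_p(sw)$ for small $s>0$, and compare its third side against its model. Taylor expansion: the $s^2$ coefficient reproduces the flat-space law of cosines and gives equality, while the $s^4$ coefficient of the comparison inequality works out to a positive multiple of $R(v,w,v,w) - K\bigl(\langle v,v\rangle\langle w,w\rangle - \langle v,w\rangle^2\bigr)$, with the correct sign. The main obstacle in this direction is to carry the expansion out uniformly across spacelike, timelike, and null 2-planes and to verify that the various sign conventions in the comparison conclusion (different model space for different causal character of the side being compared) combine to yield the \emph{two-sided} bound demanded by $R\ge K$; null 2-planes are then handled by passage to the limit from nearby nondegenerate ones, since both sides of \eqref{eq:def} are continuous in $(v,w)$.
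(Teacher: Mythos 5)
Your overall architecture matches the paper's (a modified shape operator smooth at the vertex and on null directions, a matrix Riccati comparison, integration to a distance comparison, and a Taylor expansion for the converse), but two steps as you state them do not work. In the converse, comparing the third side of the triangle $p$, $\exp_p(sv)$, $\exp_p(sw)$ ``against its model'' is vacuous: the model triangle is by definition the triangle with the \emph{same} three sidelengths, so the $s^4$ coefficient of that comparison is identically zero and no curvature information is extracted. The nontrivial content of the triangle comparison lives in distances to non-vertex points, or equivalently in the nonnormalized angles; the paper first proves (Proposition \ref{prop:equivcomp}, using the Hinge and Straightening Lemmas) that the pointwise comparison is equivalent to an angle comparison, converts that into a hinge comparison $|\sigma_0(t)\sigma_s(t)|\le|\tilde\sigma_0(t)\tilde\sigma_s(t)|$ for two geodesics with matched initial data, and only then Taylor-expands $\langle J(t),J(t)\rangle$ to fourth order. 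Some such detour through angles or interior points is unavoidable, and it is also where the model space may change between a triangle and its subtriangles --- the reason the Hinge Lemma must be proved across the transitions at $D^{\pm}$. The same omission affects your one-sentence reduction of the arbitrary-pair comparison to the vertex comparison via subtriangles.

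In the forward direction, the Riccati comparison you invoke is not a lemma that simply ``transplants'' from the Riemannian case. First, the modified operator does not satisfy $\tilde S'+\tilde S^2+\tilde R=0$: after the rescaling that makes $S$ smooth at the vertex and along null radial geodesics, the equation acquires a linear term and a rank-one term, $\nabla_GS+S^2-(1-Kh)S+R_G+K\,dh\otimes G=0$, with differentiation along integral curves of $\grad h$ rather than the affine parameter. Second, in indefinite signature the relation $A\le B$ for self-adjoint operators is delicate (the identity is not positive definite), and the implication $R_1\le R_2\Rightarrow S_1\ge S_2$ requires the Andersson--Howard perturbation argument together with a Taylor analysis of $S$ at the singular initial condition $F(0)=0$, $S(0)=I$, $S'(0)=0$; this is the technical core of the paper (Theorem \ref{thm:AHcompare}) and cannot be cited as known. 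Finally, the quantity to which the operator comparison is applied is not the length of the side $qr$ (again equal to its model value by construction), nor a second variation of that length --- the latter is precisely the argument that fails for spacelike geodesics, which are unstable critical points --- but the scalar function $h_{K,q}\circ\gamma_{pr}$ along the side opposite the base vertex, whose second derivative equals $\langle S\gamma',\gamma'\rangle$ and which therefore satisfies $h''+KE(\gamma)h\le E(\gamma)$; comparing this differential inequality with the model equation, with equal endpoint values, is what yields the vertex-to-opposite-side comparison.
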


In this paper, we restrict our attention to local triangle
comparisons (i.e.,  to normal neighborhoods) in
smooth spaces. In the Riemannian/Alexandrov theory, local triangle comparisons
have features of potential interest to semi-Riemannian and Lorentz
geometers:  they incorporate singularities, imply global comparison
theorems, and are consistent with a theory of limit spaces.  Our
longer-term goal is to  see what the extension of the  theory
presented here can contribute to similar questions in
semi-Riemannian and Lorentz geometry.


\subsection{Approach}
\label{ss:approach } 

We begin by mentioning some intuitive barriers to approaching Theorem
\ref{thm:comparison}.  In resolving them, we are
going to draw on papers by Karcher \cite{Kr} and
Andersson and Howard
\cite{AH}, putting them to different uses than were originally envisioned.

First, a
fundamental object in Riemannian theory is the locally isometrically embedded
interval, that is, the unitspeed geodesic.   These are the paths studied
in \cite{Kr} and \cite{AH}.  However, in the
semi-Riemannian case this choice constrains consideration to fields
of geodesics all having the same causal character. By contrast, our
construction, which uses affine parameters on $[0,1]$, applies
uniformly to all the geodesics radiating from a point (or orthogonally from
a nondegenerate submanifold).

Secondly, a common paradigm in Riemannian
and Alexandrov comparison theory is the construction of a curve that
is shorter than some original one, so
that the minimizing geodesic between the endpoints is even shorter.
In the Lorentz setting, this argument still works for \emph{timelike}
curves, under a causality assumption.  However, spacelike geodesics
are unstable critical points of the length functional, and so this
argument is forbidden.

Thirdly, while the comparisons we seek can be reduced in the
Riemannian setting to $1$-dimensional Riccati equations (as in
\cite{Kr}), the semi-Riemannian case seem to require matrix Riccati
equations (as in \cite{AH}).  Such increased complexity is to be
expected, since semi-Riemannian curvature bounds
below (say) have some of the qualities of
Riemannian  curvature bounds both below
\emph{and} above.

Let us start by outlining Karcher's approach to Riemannian curvature
bounds. It included a new proof of local triangle comparisons, one
that integrated infinitesimal Rauch comparisons to get distance
comparisons without using the ``forbidden argument'' mentioned above.
Such an approach, motivated by simplicity rather than necessity in the
Riemannian case, is what the semi-Riemannian case requires.

In this approach,
Alexandrov curvature bounds are  characterized by a differential
inequality. Namely, $M$ has CBB by $K$ in the triangle comparison
sense if and only if for every $q\in M$ and unit-speed geodesic
$\gamma$, the differential inequality \begin{equation}\label{eq:CB}
(f \circ\gamma)''+ Kf \circ\gamma \le 1
\end{equation}
is satisfied
           (in the barrier sense) by the following function $f=\md_Kd_q$:
\begin{equation}\label{eq:modD}
\md_Kd_q=
\begin{cases}
(1/K)(1-\cosh\sqrt{-K}d_q),\quad & K<0 \\
(1/K)(1-\cos\sqrt{K}d_q),\quad &  K>0 \\
d_q^2/2,\quad & K=0.
\end{cases}
\end{equation}

The reason for this equivalence is that the inequalities
(\ref{eq:CB}) reduce to equations in the model
spaces $S_K$; since solutions of the differential inequalities may
be compared to those of the equations, distances in $M$ may be
compared to those in $S_K$.  The functions $\md_Kd_q $ then provide a
convenient connection between triangle comparisons and curvature
bounds, since they lead via their Hessians to a
Riccati equation along radial geodesics from $q$.

We wish to view this program as a special case of a procedure on
semi-Riemannian manifolds. For a geodesic $\gamma$ parametrized
by $[0,1]$, let
\begin{equation}\label{eq:Edef1}
E(\gamma)= \langle \gamma'(0),\gamma'(0)\rangle.
\end{equation}
Thus $E(\gamma)=\pm |\gamma|^2.$
In this paper, we work with normal neighborhoods, and set
$E(p,q)=E(\gamma_{pq})$ where $\gamma_{pq}$ is the
geodesic from $p$ to $q$ that is distinguished by the normal
neighborhood.

(In a broader setting, one may instead use the definition
\begin{equation}\label{eq:Edef2}
E(p,q)=E_q(p)=\inf\{E(\gamma):\,\ \gamma \text{ is a geodesic
joining $p$ \ and
$q$}\},
\end{equation}
under hypotheses that ensure the two definitions agree locally.
In (\ref{eq:Edef2}), $E(p,q)=\infty$ if $p$ and $q$ are not
connected by a geodesic.)

Now define the
{\em modified distance function $h_{K,q}$ at $q$} by
\begin{equation}\label{eq:modE}
h_{K,q} =
\begin{cases}
(1 - \cos\sqrt{KE_q})/K =
\sum_{n=1}^\infty \frac{(-K)^{n-1}(E_q)^n}{(2n)!}, & K\ne 0\\
E_q/2, & K=0.
\end{cases}
\end{equation}
Here, the formula remains valid when the argument of cosine is imaginary,
converting $\cos$ to $\cosh$.
In the Riemannian case, $h_{K,q} =\md_Kd_q $.
The CBB
triangle comparisons we seek will be
characterized by the differential inequality
\begin{equation}\label{eq:diffineq}
(h_{K,q} \circ\gamma)'' + KE(\gamma)h_{K,q} \circ\gamma
\le E(\gamma),
\end{equation}
on any geodesic $\gamma$ parametrized by $[0,1]$.

The self-adjoint operator $S=S_{K,q}$ associated with the Hessian of $
h_{K,q}$ may be regarded as a \emph{modified shape operator}.  It has
the following properties: in the model spaces, it is a scalar
multiple of the identity on the tangent space to
$M$ at each point;  along a nonnull geodesic
from $q$, its restriction to normal vectors is a scalar multiple of
the second fundamental form of the equidistant hypersurfaces from
$q$; it is smoothly defined on the regular set of $E_q$, hence along
null geodesics from $q$ (as the second fundamental forms are not);
and finally, it satisfies a matrix Riccati equation along every
geodesic from $q$, after reparametrization as an integral curve of $\grad
h_{K,q}$.

We shall also need semi-Riemannian analogues to the
three basic triangle lemmas on which Alexandrov geometry builds,
namely, the Realizability, Hinge and
Straightening Lemmas.  The analogues
are intuitively surprising, both in one of the
quantities considered, and also in the fact that monotonicity statements
persist even though the model space may change. The  Straightening
Lemma is an indicator that, as in the standard
Riemannian/Alexandrov case, there is a singular counterpart to
the smooth theory
developed in this paper.


\subsection{Outline of paper}
\label{ss:outline} 

We begin in \S\ref{sec:modellem}  with the triangle lemmas just mentioned.
In \S\ref{sec:modE}, it is shown that the
differential inequalities  (\ref{eq:diffineq})
become equations
in the model spaces, and hence characterize our triangle comparisons.

Comparisons for the modified shape operators under
semi-Riemannian curvature bounds are  proved in
\S\ref{sec:comparericcati}, and Theorem
\ref{thm:comparison} is proved in
\S\ref{sec:comparison}.

In \S\ref{sec:algebra}, semi-Riemannian  curvature bounds are related to the
analysis by Beem and Parker of the pointwise ranges of sectional
curvature \cite{BP}, and to the ``null'' curvature bounds considered
by Uhlenbeck \cite{U} and Harris \cite{H1}.

Finally, \S\ref{sec:examples} considers examples of semi-Riemannian
spaces with curvature bounds, including Robertson-Walker ``big bang''
spacetimes.


\section{Triangle lemmas in model spaces}
\label{sec:modellem}

Say three  numbers \emph{satisfy the strict triangle inequality}
if they are positive and the largest is less than the sum of the other
two.  Denote the points of $\R^3$ whose coordinates satisfy the strict
triangle inequality by $T ^+$, and their negatives by $T^-$. A triple,
one of whose entries is the sum of the other two, will be called
\emph{degenerate}. Denote the points of $\R^3-(0,0,0)$ whose coordinates
are nonnegative degenerate triples by $D^+$, and their negatives by
$D^-$.

In Figure 1, the shaded cone is $D^+$, and the interior of its
convex hull is $T^+$.

Say a point is \emph{realized} in a model
space if its coordinates
are the sidelengths of a triangle.  As usual, set
$\pi/\sqrt{k}=\infty$ if $k\le 0$.

\begin{lem}[Realizability Lemma] \label{lem:realiz} 
Points of $\R^3-(0,0,0)$ have unique realizations, up to isometry of
the model space, as follows:
\begin{list}
{\emph{\arabic{step}.}}
{\usecounter{step}
\setlength{\rightmargin}{7mm}\setlength{\leftmargin}{7mm}}
\item
A point in $T^+$ is realized by a unique triangle in $S_K$, provided
the sum of its coordinates is $<2\pi/\sqrt{K}$.  A point in $T^-$ is
realized by a unique triangle in $-S_K$, provided the sum of its
coordinates  is $>-2\pi/\sqrt{K}$.
\item
A point in $D^+$ is realized by unique triangles in $S_K$ and $M_K$,
provided the largest coordinate is $<\pi/\sqrt{K}$. A point in $D^-$ is
realized by unique triangles in $-S_K$ and $M_K$, provided the
smallest coordinate is $>-\pi/\sqrt{K}$.
\item
A point in the complement of $T^+\cup T^-\cup D^+ \cup D^-\cup(0,0,0)$
is realized by a unique triangle in $M_0=\R^2_1$.  For $K>0$, if the
largest coordinate is $<\pi/\sqrt{K}$, the point is realized by a
unique triangle
in $M_K$.  For $K<0$, if the smallest coordinate is $> -\pi /\sqrt{-K}$,
the point is realized by a unique triangle in $M_K$.

\end{list}

\end{lem}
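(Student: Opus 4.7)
I would prove the lemma by explicit construction in each case, using the appropriate law of cosines in each model space to reduce realizability to a scalar equation for the included angle at one vertex, and reading off the size bounds as the requirement that the trigonometric or hyperbolic quantities appearing in that equation stay in their admissible ranges.

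Case~1 (triples in $T^\pm$) is the classical spherical/hyperbolic realizability theorem. For $T^+$ in $S_K$, fix $p\in S_K$, lay off $q$ at signed distance equal to the first coordinate along any geodesic through $p$, and intersect the geodesic circles centered at $p$ and $q$ whose radii are the remaining two coordinates. The law of cosines expresses $\cos\angle p$ as an explicit function of the three sidelengths, and a real solution exists precisely when that expression lies in $[-1,1]$, which unpacks to the strict triangle inequality together with perimeter $<2\pi/\sqrt{K}$; the two intersection points give the unique triangle up to reflection. The $T^-$ case in $-S_K$ follows by reversing the metric sign.

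Case~2 (degenerate triples in $D^\pm$) is realized by three collinear points with the prescribed spacings on a single geodesic. In $S_K$ (respectively $-S_K$) any geodesic works; in $M_K$ the geodesic must be spacelike for $D^+$ or timelike for $D^-$, and is isometric to a geodesic of the corresponding Riemannian model. Uniqueness up to ambient isometry is immediate, and the size bound is just the condition that all three points fit inside a single geodesic arc of length $<\pi/\sqrt{|K|}$ when that geodesic is closed.

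Case~3 (triples outside $T^\pm\cup D^\pm\cup\{(0,0,0)\}$) is the substantive one and is realized only in $M_K$. Such a triple has either mixed signs, or a single sign with one entry exceeding the sum of the other two. I would place two vertices forming the first side, whose causal character matches the sign of its coordinate (spacelike, timelike, or null), and locate the third vertex as the intersection of two signed-distance loci. For $K=0$ this is an explicit quadratic system; for $K\ne 0$, embedding $M_K$ as a quadric in $\R^3_1$ yields a Lorentzian law of cosines in the form $\cos$, $\cosh$, or a mixed trigonometric--hyperbolic expression, according to the causal types of the two sides meeting at the chosen vertex. The equation determines the included angle uniquely up to reflection.

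The main obstacle is the case-by-case bookkeeping in Case~3: each assignment of signs to the three coordinates (and hence of causal characters to the three sides) gives a different form of the law of cosines, and one must verify that the union of admissible triples over all sign patterns is exactly the stated complement --- while simultaneously checking that the excluded $T^\pm$ triples really fail to realize in $M_K$ (for instance, $(1,1,1)$ forces an imaginary ``height'' in $M_0$). Uniqueness at each stage follows from the rigidity of side--angle--side data in the model plane, and the size bounds for $K\ne 0$ emerge automatically from keeping the arguments of $\cos$, $\sin$, $\cosh$, $\sinh$ inside their fundamental domains.
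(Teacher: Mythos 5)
Your plan is correct and follows the same architecture as the paper's proof: Part 1 and the $\pm S_K$ half of Part 2 are taken as standard, the remaining triples are ordered $a\ge b\ge c$ and realized by laying off one side and intersecting two distance loci, the case $K\ne 0$ is handled in the quadric model in Minkowski $3$-space, and $T^-$, $D^-$ and $K<0$ are all dispatched by reversing the sign of the metric ($M_{-K}=-M_K$). The one genuine difference is the engine for the existence-and-uniqueness step in $M_K$: you reduce to a law of cosines and a range check on the included angle, whereas the paper argues directly with the geometry of the distance circles themselves --- in $\R^2_1$ these are pairs of null lines (radius $0$) or hyperbolas asymptotic to them, and in the quadric they are sections by planes parallel to a tangent plane, so the intersection condition is read off at once as ``$a\ge b+c$ when $c\ge 0$,'' i.e.\ exactly the complement of $T^+$. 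Your algebraic packaging buys a uniform formula (the unified law of cosines (\ref{eq:coslaw})) but hides a real subtlety that you should surface if you carry the plan out: in the Lorentz plane the achievable values of the nonnormalized angle $\langle\gamma_{qp}'(0),\gamma_{qr}'(0)\rangle$ for fixed causal characters need not form an interval (for two unit spacelike directions the inner product ranges over $(-\infty,-1]\cup[1,\infty)$, so the achievable third-side energies are $(-\infty,(b-c)^2]\cup[(b+c)^2,\infty)$), hence ``keeping arguments in their fundamental domains'' is not a routine check, and each achievable value is attained on two branches whose triangles must be shown to be isometric --- a fact the circle-intersection picture makes transparent but ``SAS rigidity'' alone does not. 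This is precisely the sign-pattern bookkeeping you flagged as the main obstacle; the paper's geometric formulation is how it avoids doing that bookkeeping case by case.
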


\begin{proof}
Part 1 is standard, as is Part 2 for $\pm S_K$.  Now consider a point
not in $T^+\cup T^-\cup (0,0,0)$, and denote its coordinates by $a\ge b\ge
c$.

To realize this point in $M_0=\R^2_1$, suppose $a>0$ and take a
segment $\gamma$ of length $a$ on the $x^1$-axis.  Since distance
``circles'' about a point $p$ are pairs of lines of slope $\pm 1$
through $p$ if the radius is $0$, and hyperbolas asymptotic to these
lines otherwise, it is easy to see that circles about the endpoints of
$\gamma$ intersect, either in two points or tangentially, subject only
to the condition that $a\ge b+c$ if $c\ge 0$, namely, the point is not
in $T^+$.  Thus our point may be realized in $R^2_1$, uniquely up to
an isometry of $R^2_1$. On the other hand, if $a\le 0$ then $c<0$, so
by switching the sign of the metric, we have just shown there is a
realization in $-\R^2_1 = \R^2_1$.

For $K>0$, $M_K$ is the simply connected cover of the quadric surface
$<p,p>=1/K$ in Minkowski $3$-space with signature $(++-)$.  Suppose
$0<a<\pi/\sqrt{K}$, and take a segment $\gamma$ of length $a$ on the
quadric's equatorial circle of length $2\pi/\sqrt{K}$ in the
$x^1x^2$-plane.  A distance circle about an endpoint of $\gamma$ is a
hyperbola or pair of lines obtained by intersection with a $2$-plane
parallel to or coinciding with the tangent plane. Two circles about
the endpoints of $\gamma$ intersect, either in two points or
tangentially, if the vertical line of intersection of their $2$-planes
cuts the quadric.  This occurs subject only to the condition that
$a\ge b+c$ if $c\ge 0$, namely, the point is not in $T^+$.  On the
other hand, if $a\le 0$ then $c<0$.  Take a segment $\gamma$ of length
$c$ in the quadric, where $\gamma$ is symmetric about the
$x^1x^2$-plane.  Circles of nonpositive radius about the endpoints of
$\gamma$ intersect if the horizontal line of intersection of their
$2$-planes cuts the quadric, and this occurs subject only to the
condition that $c<a+b$, namely, the point is not in $T^-$.

Since $M_{-K}=-M_K$, switching the sign of the metric completes the proof.
\end{proof}

Let us say the points of $\R^3-(0,0,0)$ for which Lemma
\ref{lem:realiz} gives model space realizations \emph{satisfy size
bounds for $K$} (for $K=0$, no size bounds apply). Such a point may be
expressed as $(|pq|,|qr|,|rp|)$, where $\triangle pqr$ is a realizing
triangle in a model space of curvature $K$, the geodesic
$\gamma_{pq}$ is a side parametrized by $[0,1]$ with
$\gamma_{pq}(0)=p$, and we write $|pq|=|\gamma_{pq}|$. By the
\emph{nonnormalized angle $\angle pqr$}, we mean the
inner product
$<\gamma_{qp}'(0),\gamma_{qr}'(0)>$.

In our terminology,
$\angle pqr$ is the \emph{included}, and $\angle qpr$ and $\angle qrp$
are the \emph{shoulder}, nonnormalized angles for $(|pq|,|qr|,|rp|)$.
This terminology is welldefined since the realizing model space and
triangle are uniquely determined except for degenerate triples.  The
latter have only two realizations, which lie in geodesic segments in
different model spaces but are isometric to each other.

An important ingredient of the Alexandrov theory is the Hinge Lemma
for angles in $S_K$, a monotonicity statement that follows directly
from the law of cosines. Part 1 of the following lemma is its
semi-Riemannian version.  A new ingredient of our
arguments is the use of nonnormalized shoulder angles, in which both
the ``angle'' and one side vary simultaneously.  Not only do we obtain
a monotonicity statement that for
$K\ne 0$ is not directly apparent from the law of cosines (Part 2 of
the following lemma), but we
find that monotonicity persists even as the model space changes.

\begin{lem}[Hinge Lemma]\label{lem:hinge} 
Suppose a point of $\R^3-(0,0,0)$ satisfies size bounds for $K$, and
the third coordinate varies with the first two fixed.  Denote the point by
$(|pq|,|qr|,|rp|)$ where $\triangle pqr$ lies in a possibly varying
model space of curvature $K$.
\begin{list}
{\emph{\arabic{step}.}}
{\usecounter{step}
\setlength{\rightmargin}{7mm}\setlength{\leftmargin}
{7mm}}
\item
The included nonnormalized angle $\angle pqr$ is a decreasing
function of $|pr|$.
\item
Each shoulder nonnormalized angle, $\angle qpr$ or $\angle qrp$, is
an increasing function of $|pr|$.
\end{list}
\end{lem}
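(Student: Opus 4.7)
The plan is to reduce both parts of the lemma to the law of cosines in the three model spaces $S_K$, $M_K$, $-S_K$, verify the monotonicity by direct computation within each regime, and glue across the transitions where the ambient model space changes as $|pr|$ varies.

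In each model space the law of cosines can be written in a form that is uniform in $K$ and across the spacelike/timelike/null distinctions, using the modified distance function $h_K$ of (\ref{eq:modE}). A short computation in $S_K$, $M_K$, and $-S_K$ shows that schematically
\begin{equation*}
h_K(E(p,r))=h_K(E(p,q))+h_K(E(q,r))-K\,h_K(E(p,q))\,h_K(E(q,r))-C(|pq|,|qr|)\,\angle pqr,
\end{equation*}
where $E$ is the signed length squared of the relevant geodesic and $C$ is a coefficient built from the arclength derivative $\dot h_K$, strictly positive on the allowed parameter range. Because (\ref{eq:modE}) is a convergent power series in $K$ and in $E$, this formula makes sense uniformly across all three model spaces and through the null case. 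At $K=0$ it specializes to the familiar $E(p,r)=E(p,q)+E(q,r)-2\,\angle pqr$.

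For Part~1, fix $|pq|$ and $|qr|$. The displayed identity then expresses $\angle pqr$ as an explicit function of $|pr|$ (equivalently of $E(p,r)$). Differentiating in $|pr|$ and using that $\dot h_K>0$ on the allowed range together with the positivity of $C$ shows that $\angle pqr$ is strictly decreasing in $|pr|$. For Part~2, apply the same law of cosines but at the shoulder vertex $p$ in place of $q$, so as to express $\angle qpr$ in terms of the fixed $|pq|$ and $|qr|$ and the varying $|pr|$. Here the derivative with respect to $|pr|$ is more subtle because the adjacent side and the nonnormalized inner product vary together; nevertheless a direct calculus computation in each of the three model spaces shows the derivative is strictly positive on the allowed range. (As a sanity check, at $K=0$ the nonnormalized shoulder angle is $(E(p,q)+E(p,r)-E(q,r))/2$, whose derivative in $|pr|$ is manifestly positive.)

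It remains to promote the monotonicities in each single-model-space regime to the full range of $|pr|$, across transitions such as $T^+\to D^+\to\R^3-(T^+\cup T^-\cup D^+\cup D^-\cup(0,0,0))$ in Lemma \ref{lem:realiz}. By the uniqueness clause of that lemma, a degenerate triple has realizations in the two adjacent model spaces that are isometric to each other, so the nonnormalized shoulder and included angles agree at the boundary triple; continuity then glues the piecewise monotonicities into a global one.

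The main obstacle I expect is Part~2: the shoulder monotonicity is not visibly apparent from the law of cosines, since a monotone increase in the adjacent side must outweigh an opposing change in the angular factor, and the persistence of monotonicity across a change of model space is still more surprising. Carrying out the derivative computation in each of $S_K$, $M_K$, and $-S_K$, and then verifying matching of nonnormalized angles at the boundary triples between regimes, will be the bulk of the work.
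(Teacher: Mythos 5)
Your handling of Part~1 and of the transitions between model spaces is fine and close to the paper: the Remark following the lemma derives Part~1 exactly as you do, from the unified law of cosines with the two adjacent factors fixed and nonnegative, and the paper glues across a change of model space by matching values at the degenerate triples, whose two realizations are isometric. The genuine gap is Part~2, which you correctly identify as the bulk of the work and then do not carry out. Solving the law of cosines at the shoulder vertex $p$ for $\angle qpr$ gives, schematically,
\begin{equation*}
\angle qpr \;=\; \frac{\cos\sqrt{KE(\gamma_{qr})}-\cos\sqrt{KE(\gamma_{pq})}\,\cos\sqrt{KE(\gamma_{pr})}}{K\,\dfrac{\sin\sqrt{KE(\gamma_{pq})}}{\sqrt{KE(\gamma_{pq})}}\,\dfrac{\sin\sqrt{KE(\gamma_{pr})}}{\sqrt{KE(\gamma_{pr})}}},
\end{equation*}
in which the varying quantity $E(\gamma_{pr})$ enters both numerator and denominator. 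Differentiating in $|pr|$ leads to an expression whose sign is governed by a combination of the form $\cos\sqrt{KE(\gamma_{qr})}\,P+\cos\sqrt{KE(\gamma_{pq})}\,Q$ with $P,Q>0$, and under the stated size bounds these cosines need not be positive: in $S_K$ with $K>0$ the triple $\bigl(0.9\pi/\sqrt K,\;0.9\pi/\sqrt K,\;c\bigr)$ with $c$ small satisfies the size bound while both cosines are negative. So the asserted ``direct calculus computation showing the derivative is strictly positive on the allowed range'' cannot be settled by sign inspection, and in configurations like the one above it does not produce the claimed sign at all. This is exactly the difficulty the paper flags when it says that the Part~2 monotonicity ``for $K\ne 0$ is not directly apparent from the law of cosines''; asserting the computation without performing it leaves the hardest step of the lemma unproved.

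The paper's route around this is different in kind. It first proves both parts for $K=0$ from the exact vector identities (\ref{eq:ip1}) and (\ref{eq:ip2}) in the semi-Euclidean plane, where the shoulder monotonicity is a one-line rearrangement. For $K\ne 0$ it replaces the intrinsic triangle by the chordal triangle in the ambient semi-Euclidean space of the quadric model, applies the flat identities there (chord length being an increasing function of arc length), and then transfers monotonicity of the chordal inner product $\langle A_1,A_2\rangle$ to the tangential one $\langle v_1,v_2\rangle$ via the decomposition $A_i=\alpha(\ell_i)v_i+\mu(\ell_i)N$ with $N$ normal to the tangent plane. To repair your proposal you would need either to carry out the differentiation honestly and determine exactly where it yields the required sign, or to adopt a reduction of this chordal type; as written, Part~2 rests on an unexecuted and, on its face, unreliable claim.
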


\begin{proof} Suppose $K=0$.  Then the model spaces are
semi-Euclidean planes, and the sides of a triangle may be represented
by vectors $A_1$, $A_2$ and $A_1-A_2$.  Set
$a\subi=<A\subi,A\subi>$ and $c=<A_1-A_2,A_1-A_2>$, so
\begin{equation}\label{eq:ip1}
c=a_1+a_2-2<A_1,A_2 >.
\end{equation}
Since $c$ is an increasing function of its sidelength, Part $1$ in
any fixed model space is immediate by taking $a_1$
and $a_2$ in (\ref{eq:ip1}) to be fixed. For  Part $2$ in any fixed
model space, it is only necessary to rewrite (\ref{eq:ip1}) as
\begin{equation}\label{eq:ip2}
c-a_1+2<A_1,A_2> = a_2,
\end{equation}
where $a_1$ and $c$ are fixed.

A change of model space occurs when the varying point in
$\R^3-(0,0,0)$ moves upward on a vertical line $L$, and passes either
into or out of $T^+$ by crossing $D^+$ (the same argument will hold
for $T^-$ and $D^-$).  See Figure \ref{fig:hinge}.
\begin{figure}[h]
\begin{center}
\includegraphics[width= 2in]{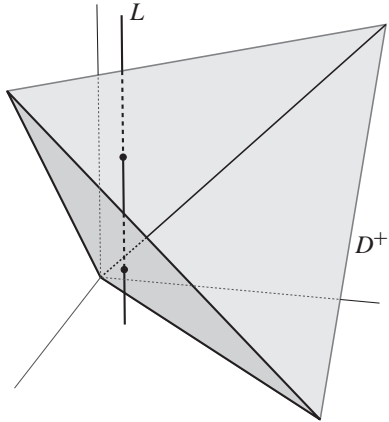}
\end{center}
\caption{Model space transitions in sidelength space}
\label{fig:hinge}
\end{figure}
             Thus $L$ is the union of three closed segments, 
intersecting only at
their two endpoints on $D^+$.  We have just seen that the included
angle function is decreasing on each segment, since the realizing
triangles are in the same model space (by choice at the endpoints and
by necessity elsewhere).  Since the values at the endpoints are the
same from left or right, the included angle function is decreasing on
all of $L$. Similarly, each shoulder angle function is increasing.

Suppose $K>0$.  The vertices of a triangle in the quadric model space
are also the vertices of a triangle in an ambient $2$-plane, whose
sides are the chords of the original sides. The length of the chord is
an increasing function of the original sidelength. Thus to derive the
lemma for $K>0$ from (\ref{eq:ip1}) and (\ref{eq:ip2}), we must verify
the following: If a triangle in a quadric model space varies with
fixed sidelengths adjacent to one vertex, and $v_1,\,v_2$ are the
tangent vectors to the sides at that vertex, then $<v_1,v_2>$ is an
increasing function of $<A_1,A_2>$ where the $A\subi$ are the chordal
vectors of the two sides.  Indeed, all points of a distance circle of
nonzero radius in the quadric model space lie at a fixed nonzero
ambient distance from the tangent plane at the centerpoint.  Thus
$A\subi$ is a linear combination of $v\subi$ and a fixed normal vector
$N$ to the tangent plane, where the coefficients depend only on the
sidelength $\ell\subi$.  The desired correlation follows.

By  switching the sign of the metric, we obtain the claim for $K<0$.
\end{proof}

\begin{rem} The Law of Cosines in a semi-Riemannian model space with
$K=0$ is (\ref{eq:ip1}).   If $K\ne 0$, the Law of Cosines for
$\triangle pqr$ may be written in unified form as follows:
\begin{align}\label{eq:coslaw}
\cos\sqrt{K E(\gamma_{pr})} = \cos\sqrt{ KE(  \gamma_{pq} ) }
&\cos\sqrt{ KE(\gamma_{qr})}\\
&-K\angle pqr
\frac{ \sin\sqrt{KE(\gamma_{pq})}} {\sqrt{KE(\gamma_{pq})  }}
\frac{ \sin\sqrt{KE(\gamma_{qr})}} {\sqrt{KE(\gamma_{qr})  }}.\nonumber
\end{align}
Here we assume
$\triangle pqr$  satisfies the size bounds for $K$.  Then  each
sidelength is $<\pi/\sqrt{K}$ if $K>0$, and $>-\pi/\sqrt{-K}$ if
$K<0$.
Part 1 of Lemma \ref{lem:hinge} can be derived
from (\ref{eq:coslaw}) as follows.  Fix
$ E(\gamma_{pq})$ and $ E(\gamma_{qr})$, and observe that
$\cos\sqrt{Kc}$ is decreasing in $c$ if $K>0$, regardless of the sign
of $c$ and even as $c$ passes through $0$, and
increasing in $c$ if $K<0$.  The size bounds imply
that the
factors $\frac{ \sin\sqrt{Ka}} {\sqrt{Ka}}$  become either
$\frac{ \sin\sqrt{Ka}} {\sqrt{Ka}}$ for $\sqrt{Ka}\in [0,\pi)$,
or $\frac{\sinh\sqrt{|Ka|}}{\sqrt{|Ka|}}$, depending on the signs of
$K$ and $a$, and hence are nonnegative.
\end{rem}

Now we are ready to prove a semi-Riemannian version of Alexandrov's
Straightening Lemma, according to which a triangle inherits
comparison properties from two smaller triangles that subdivide it.  It
turns out that the comparisons we need are on nonnormalized shoulder
angles. Moreover, the original and ``subdividing'' triangles may lie
in varying model spaces, so that geometrically we have come a long
way  from the original interpretation in terms of hinged rods.

Since geodesics are parametrized by $[0,1]$, a
point $m$ on a directed side of a triangle inherits an affine
parameter $\lambda_m\in [0,1]$.

\begin{lem}[Straightening Lemma for Shoulder Angles] \label{lem:snap}
Suppose $\triangle\widetilde{p}\widetilde{q}
\widetilde{r}$ is a triangle satisfying size bounds for $K$ in a model
space of curvature $K$.  Let $\widetilde{m}$ be a point on side
$\widetilde{p}\,\widetilde{r}$, and set $\lambda =
\lambda_{\widetilde{m}}$. Let $\triangle q_1p_1m_1$ and $\triangle
q_2m_2r_2$ be triangles in respective model spaces of curvature $K$,
where $|q_1m_1|=|q_2m_2|=|\widetilde{q} \widetilde{m}|$,
$|q_1p_1|=|\widetilde{q}\,\widetilde{p}|$,
$|q_2r_2|=|\widetilde{q}\,\widetilde{r}|$,
$|p_1m_1|=|\widetilde{p}\widetilde{m}|$,
and $|m_2r_2|=|\widetilde{m}\widetilde{r}|$.  Assume $|q_i
m_i|<\pi/\sqrt{K}$ if $K>0$, and $|q_i m_i|> -\pi/\sqrt{-K}$
if $K<0$. If
$$(1-\lambda)\,\angle p_1m_1q_1 + \lambda\,\angle
r_2m_2q_2 \ge 0,$$
then
$$\angle
\widetilde{q}\,\widetilde{p}\widetilde{m} \ge \angle q_1p_1m_1
\;\textit{ and }\;
\angle \widetilde{q}\,\widetilde{r}\widetilde{m} \ge \angle q_2r_2m_2.$$
The same statement holds with all inequalities reversed.
\end{lem}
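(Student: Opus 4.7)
The aim is to derive both shoulder-angle comparisons from the Hinge Lemma (Lemma~\ref{lem:hinge}). The six sidelength equalities amount to saying that $\triangle q_1p_1m_1$ and the sub-triangle $\triangle\tilde{q}\tilde{p}\tilde{m}$ share all three sidelengths, and likewise for $\triangle q_2m_2r_2$ and $\triangle\tilde{q}\tilde{m}\tilde{r}$. Applying Part~1 of the Hinge Lemma to each pair, with the two sides meeting at $\tilde{p}$ (respectively $p_1$) fixed and the opposite side $|\tilde{q}\tilde{m}|=|q_1m_1|$ common, the included nonnormalized angles must coincide. Thus
\[
\angle\tilde{q}\tilde{p}\tilde{m}=\angle q_1p_1m_1, \qquad \angle\tilde{q}\tilde{r}\tilde{m}=\angle q_2r_2m_2,
\]
and both claimed inequalities hold, with equality.

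Next I would check that the hypothesis at $\tilde{m}$ is automatic in this setup. Collinearity of $\tilde{p},\tilde{m},\tilde{r}$ with $\tilde{m}$ at parameter $\lambda$ on $\gamma_{\tilde{p}\tilde{r}}$ gives $\gamma'_{\tilde{m}\tilde{p}}(0)=-\tfrac{\lambda}{1-\lambda}\,\gamma'_{\tilde{m}\tilde{r}}(0)$, whence
\[
(1-\lambda)\angle\tilde{p}\tilde{m}\tilde{q}+\lambda\angle\tilde{r}\tilde{m}\tilde{q}=0.
\]
Applying the Hinge Lemma again at the vertices $\tilde{m}/m_i$, the matched-sidelength congruence transfers angles, producing $(1-\lambda)\angle p_1m_1q_1+\lambda\angle r_2m_2q_2=0$, so the hypothesis $\ge 0$ is satisfied as equality.

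The main technical obstacle I expect is the degenerate regime $D^+\cup D^-$ of the Realizability Lemma (Lemma~\ref{lem:realiz}), where a matched sidelength triple may be realized in two distinct model spaces of curvature $K$. In every such realization the triangle reduces to a collinear configuration along a common geodesic, and the nonnormalized angle---being an inner product of tangent vectors along that geodesic---depends only on the signed subsegment lengths, not on the choice of realization; this keeps the identities above intact. The reversed-inequality version of the conclusion then follows immediately, since every step of the argument is an equality.
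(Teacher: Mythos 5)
Your proof establishes only a degenerate reading of the lemma, and that degeneracy is a symptom of a misprint in the statement rather than of the lemma's actual content. You correctly observe that if $|q_1m_1|=|\tilde q\tilde m|$ is taken literally, then $\triangle q_1p_1m_1$ and $\triangle\tilde q\,\tilde p\tilde m$ share all three sidelengths, are congruent by the Realizability Lemma, and the conclusion holds as an equality --- which would make both the angle hypothesis and the lemma itself pointless. But the way the lemma is invoked in the proof of Proposition \ref{prop:equivcomp} (the implication $3\Rightarrow 2$) shows that the intended hypothesis is only $|q_1m_1|=|q_2m_2|$, a common but otherwise \emph{unconstrained} value: there one takes $|q_1m_1|=|qm|$ for a triangle in the manifold, and the whole point of the argument is to \emph{conclude} $|qm|\ge|\tilde q\tilde m|$, which is vacuous under your reading. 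The fact that your argument renders the stated hypothesis ``automatically satisfied with equality'' should have been the signal that the statement had been misread, not a confirmation that all is well.

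Under the intended hypotheses your congruence argument is unavailable, and the genuinely nontrivial content is absent from your proposal. The paper's proof runs as follows. Collinearity of $\tilde p,\tilde m,\tilde r$ gives $(1-\lambda)\,\angle\tilde p\tilde m\tilde q+\lambda\,\angle\tilde r\tilde m\tilde q=0$ (you derive this identity, but use it only to argue that the hypothesis is automatic). Comparing it with the hypothesis $(1-\lambda)\,\angle p_1m_1q_1+\lambda\,\angle r_2m_2q_2\ge 0$ and using $\lambda\in[0,1]$, at least one of $\angle p_1m_1q_1\ge\angle\tilde p\tilde m\tilde q$ or $\angle r_2m_2q_2\ge\angle\tilde r\tilde m\tilde q$ must hold. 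In whichever case holds, the two triangles being compared share the two sides adjacent to $p_1$ (resp.\ $r_2$), so Lemma \ref{lem:hinge}.2 (shoulder angles increase with the opposite side, even across a change of model space) gives $|q_im_i|\ge|\tilde q\tilde m|$ for that $i$; since $|q_1m_1|=|q_2m_2|$, the inequality then holds for both $i$. Finally Lemma \ref{lem:hinge}.1 (the included angle decreases in the opposite side) applied at $p_1$ and at $r_2$ yields $\angle q_1p_1m_1\le\angle\tilde q\,\tilde p\tilde m$ and $\angle q_2r_2m_2\le\angle\tilde q\,\tilde r\tilde m$. The dichotomy in the second step, and the transfer of the resulting inequality from one subtriangle to the other via $|q_1m_1|=|q_2m_2|$, are the ideas your proposal needs and does not contain.
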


\begin{proof}
By the definition of nonnormalized angles, $(1-\lambda)\,\angle
\widetilde{q}\widetilde{m}\widetilde{p} + \lambda\,\angle
\widetilde{r}\widetilde{m}\widetilde{p} =0$.  Therefore, by
hypothesis, either $\angle q_1m_1p_1 \ge \angle
\widetilde{q}\widetilde{m}\widetilde{p}$ or $\angle r_2m_2p_2 \ge
\angle \widetilde{q}\widetilde{m}\widetilde{p}$.  By Lemma
\ref{lem:hinge}.2, the inequality $|p\subi m\subi|\ge
|\widetilde{p}\widetilde{m}|$ holds for either $i=1$ or $i=2$, and
hence for both. But then by Lemma \ref{lem:hinge}.1, the claim
follows.
\end{proof}


\section{Modified distance functions on model spaces}
\label{sec:modE}

In this section we give a unified proof that in the model spaces of
curvature $K$, the
restrictions to geodesics $\gamma$ of the modified distance functions
$h_{K,q} $
defined by (\ref{eq:modE}) satisfy the differential equation
\begin{equation}\label{eq:modKaffine}
(h_{K,q} \circ\gamma)'' + K\langle \gamma',\gamma'\rangle h_{K,q}\circ\gamma
= \langle \gamma',\gamma'\rangle.
\end{equation}

We begin by constructing the $K$-affine functions on the model spaces.
For intrinsic metric spaces the notion of a $K$-affine function was considered
in \cite{AB1} and their structural implications were pursued in
\cite{AB2}. For semi-Riemannian manifolds the definition should be formulated
to account for the causal character of geodesics, as follows.

\begin{defn} A {\em $K$-affine function} on a semi-Riemannian manifold
is a real-valued function $f$ such that for every geodesic $\gamma$ the
restriction satisfies
\begin{equation}\label{eq:Kaffine}
(f\circ\gamma)'' + K\langle\gamma',\gamma'\rangle f = 0.
\end{equation}
We say $f$ is \emph{$K$-concave} if ``$\le 0$'' holds in
(\ref{eq:Kaffine}), and \emph{$K$-convex} if ``$\ge 0$'' holds.
\end{defn}

          (Elsewhere we have called the latter classes
\emph{$\mathcal{F}(K)$-concave/convex}.)

As in the Riemannian case, the $n$-dimensional model spaces of
curvature $K$ carry an
$n+1$-dimensional vector space of $K$-affine functions, namely, the
space of restrictions of linear functionals in the ambient semi-Euclidean
space of a quadric surface model.

Specifically, let $ \R^{n+1}_k$ be the semi-Euclidean space of
index $k$.  For
$K\ne 0$, set $Q_K= \{p \in \R^{n+1}_k : \langle p,p\rangle =
\frac{1}{K}\}$, with the induced semi-Riemannian metric, so that $Q_K$
is an $n$-dimensional space of constant curvature $K$.  (The
$2$-dimensional model spaces $M_K$ are the universal covers of such
quadric surfaces.)
For $q\in Q_K $, let $\ell_{K,q}:Q_K\to\R$ be the restriction to $Q_K$ of the
linear functional on $\R^{n+1}_k $
dual to the element $q$, namely, $\ell_{K,q}(p) = \langle q,p\rangle$.
Define $E_q$ on $Q_K$ by (\ref{eq:Edef2}).

\begin{prop} \label{prop:Kaffine}
For $K\ne 0$, the function $\ell_{K,q}$ on $Q_K$ is $K$-affine.  
For any $p$ that is
joined to $q$ by a geodesic in $Q_K$,
$$\ell_{K,q}(p) = \frac{1}{K}\cos\sqrt{KE_q(p)},$$
where the argument of cosine
may be imaginary.
\end{prop}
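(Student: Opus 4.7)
The plan is to exploit the extrinsic description of $Q_K$ as the level set $\{p:\langle p,p\rangle=1/K\}$ in $\R^{n+1}_k$, whose ambient normal at any point is proportional to the position vector itself. Consequently, for any geodesic $\gamma$ in $Q_K$ the ambient second derivative $\gamma''$ must be parallel to $\gamma$, say $\gamma''(t)=\alpha(t)\gamma(t)$ for some scalar $\alpha$. To pin down $\alpha$, I would differentiate the identity $\langle\gamma,\gamma\rangle=1/K$ twice, obtaining $\langle\gamma',\gamma\rangle=0$ and then $\langle\gamma'',\gamma\rangle=-\langle\gamma',\gamma'\rangle$. Combining this with $\gamma''=\alpha\gamma$ and $\langle\gamma,\gamma\rangle=1/K$ yields $\alpha=-K\langle\gamma',\gamma'\rangle$, which is constant along $\gamma$ since $\langle\gamma',\gamma'\rangle$ is preserved along geodesics. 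The key identity is therefore
\[
\gamma''(t)=-K\langle\gamma',\gamma'\rangle\,\gamma(t).
\]

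From this single identity both assertions follow by direct computation. For the $K$-affinity claim, set $f(t)=\ell_{K,q}(\gamma(t))=\langle q,\gamma(t)\rangle$ and differentiate twice, treating $q$ as a constant ambient vector: $f''(t)=\langle q,\gamma''(t)\rangle=-K\langle\gamma',\gamma'\rangle\,\langle q,\gamma(t)\rangle=-K\langle\gamma',\gamma'\rangle f(t)$, which is exactly equation (\ref{eq:Kaffine}). For the explicit formula, I would take $\gamma:[0,1]\to Q_K$ to be the geodesic from $q$ to $p$, so that $\langle\gamma',\gamma'\rangle=E(\gamma)=E_q(p)$ is a constant. Then $f$ satisfies the constant-coefficient linear ODE $f''+KE_q(p)\,f=0$ with initial data $f(0)=\langle q,q\rangle=1/K$ and $f'(0)=\langle q,\gamma'(0)\rangle=0$; the latter holds because differentiating the defining equation of $Q_K$ shows that $T_qQ_K$ is the ambient orthogonal complement of $q$. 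The unique solution is $f(t)=(1/K)\cos\!\bigl(\sqrt{KE_q(p)}\,t\bigr)$, and evaluation at $t=1$ recovers the stated identity.

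The computation is genuinely short, so the only point requiring care is the uniform treatment of the three causal characters of $\gamma$. When $KE_q(p)>0$ the solution is an ordinary cosine; when $KE_q(p)<0$ the argument is imaginary and the formula is interpreted via $\cosh$, as flagged in the statement; and when $E_q(p)=0$ (the null case) the ODE degenerates to $f''=0$ with $f(0)=1/K$ and $f'(0)=0$, forcing $f\equiv 1/K$, consistent with the convention $\cos(0)=1$. I do not expect any real obstacle beyond this bookkeeping; the attractive feature of the approach is that the single extrinsic identity for $\gamma''$ delivers both the $K$-affinity equation and the closed-form expression for $\ell_{K,q}$ simultaneously.
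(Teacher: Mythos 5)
Your proposal is correct and follows essentially the same route as the paper: both arguments exploit the quadric model $Q_K\subset\R^{n+1}_k$, the fact that the ambient normal at $p$ is the position vector $p$ itself, and the resulting identity $(\ell_{K,q}\circ\gamma)''=-K\langle\gamma',\gamma'\rangle\,\ell_{K,q}\circ\gamma$, followed by the same initial conditions $f(0)=1/K$, $f'(0)=0$ to solve the constant-coefficient ODE. The only cosmetic difference is that you obtain the second derivative by computing the ambient acceleration $\gamma''=-K\langle\gamma',\gamma'\rangle\gamma$ of the geodesic directly, whereas the paper computes $\grad\ell_{K,q}=\pi_p q$ and differentiates via the projected connection $\nabla_vX=\pi_pD_vX$; these are the same computation packaged differently, and your handling of the null and imaginary-argument cases matches the paper's conventions.
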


\begin{proof} We use the customary identification of elements of $\R^{n+1}_k $
with tangent vectors to $\R^{n+1}_k $and $Q_K$. Then the gradient
of the linear functional $\langle q,\cdot\rangle$ on $\R^{n+1}_k $ is $q$,
viewed as a parallel vector field. For $p \in Q_K$,
projection $\pi_p:T_p\R^{n+1}_k \to T_pQ_K$ is given by
$\pi_p(v) = v - K\langle v,p\rangle p$. In particular, $\pi_pp=0$. It
is easily checked that
$\grad_p\ell_{K,q} = \pi_p q$.

The connection $\nabla$ of $Q_K$ is related to the connection  $D $
of $\R^{n+1}_k $ by
projection, that is, $\nabla_v X = \pi_pD_v X$ for $v\in T_pQ_K$.
Writing $p = \gamma(t)$, $v = \gamma'(t)$ for a geodesic $\gamma$ of
$Q_K$, then
\begin{align}
(\ell_{K,q}\circ\gamma)''(t) = \langle \nabla_v\grad\ell_{K,q} , v \rangle
           &= \langle \pi_pD_v\pi_p q , v \rangle \nonumber \\
		&= \langle\pi_pD_v(q - K \langle q,p\rangle p), v
\rangle \nonumber \\
		&= \langle \pi_p (0 - K\langle q,v\rangle p - K\langle
q,p\rangle \nabla_vp), v \rangle \nonumber \\
		&= -K\langle v,v\rangle \ell_{K,q}(\gamma (t)).\label{eq:hess}
\end{align}
Thus $\ell_{K,q}$ is $K$-affine.

Since $q$ is orthogonal to the tangent
plane $T_qQ_K$, the derivatives of $\ell_{K,q}$ at $q$ are all $0$.
Along a geodesic
$\gamma$ in $Q_K$ that starts at $q$, the initial conditions for
$\ell_{K,q}\circ\gamma$ are
$\ell_{K,q}(q) = 1/K$, $(\ell_{K,q}\circ\gamma)'(v) = 0$, so
the formula for $\ell_{K,q}\circ\gamma(t)$ is
$\cos(\sqrt{K\langle v, v\rangle}\, t)/K$.
\end{proof}

For the case $K = 0$ we consider the quadric surface model to be a
hyperplane not through the origin, so that the affine functions on it
are trivially the restrictions of linear functionals.

On a model space $Q_K$ of curvature $K\ne 0$, the modified distance
function $h_{K,q} $ defined by (\ref{eq:modE}) may be written on its
domain as
\begin{equation}
\label{eq:modelh}
h_{K,q} = -\ell_{K,q} + 1/K,
\end{equation}
and  satisfies
the same differential equation along geodesics as
$\ell_{K,q}$ except for an
additional constant term, that is, $h_{K,q} $ satisfies
(\ref{eq:modKaffine}).
It is trivial to check that this equation holds when $K = 0$
and  $h_{K,q} =E_q/2$.


\section{
Ricatti comparisons for modified shape operators}
\label{sec:comparericcati}

In a given semi-Riemannian manifold $M$, set
$h=h_{K,q}$ (as in (\ref{eq:modE})) for some fixed choice of
$K$ and $q$.  Define the
\emph{modified shape operator} $S=S_{K,q}$, on the region where $h$ is smooth,
to be the self-adjoint operator associated with the
Hessian of $h$, namely,
\begin{equation}
\label{eq:Sdef1}
Sv=\nabla _v\grad h.
\end{equation}
The form of  $h$ was
chosen so that in a \emph{model space} $Q_K$, $S$ is
always a scalar multiple of the identity.
Indeed, at any point in $Q_K$,
\begin{equation}
S=
\begin{cases}
I,\quad & \text{if \,} K=0, \\
K\ell_{K,q} \cdot I,\quad & \text{if \,} K\ne 0,
\label{eq:modelS}
\end{cases}
\end{equation}
where the latter equality is by Proposition \ref{prop:Kaffine} and
(\ref{eq:hess}).

Below, our  Riccati equation (\ref{eq:riccati}) along radial
geodesics $\sigma$ from $q$  differs
from the standard one in  \cite{AH} and
\cite{Kr}, being adjusted to facilitate the proof of Theorem
\ref{thm:comparison}.
Thus it applies even if $\sigma$ is null; it concerns an operator
$S$ that is defined on the whole
tangent space; when $\sigma$ is nonnull, the restriction of $S$ to the normal
space of $\sigma$ does not agree with the second fundamental
form of the equidistant hypersurface but rather
with a rescaling of it;  and we do not differentiate with
respect to an affine parameter along $\sigma$,
but rather use the integral curve parameter of
$\grad h$.

The gradient vector field $G=\grad h$ is tangent to the radial
geodesics from $q$.  Note that $G$ is nonzero along null geodesics
radiating from $q$ even though $h$ vanishes along such geodesics.
Specifically, $G$ may be expressed in terms of $\grad E_q$ on a
normal coordinate neighborhood via (\ref{eq:modE}).  Here $\grad
E_q=2P$, where $P$ is the  image under $d\exp_q$ of the position vector
field $v\mapsto v_v$ on $T_qM$ (see \cite[p. 128]{O'N}).  If $K=0$,
then $G= P$, and
an affine
parameter $t$ on a radial geodesic from $q$ is given in terms of the
integral curve parameter $u$ of $G$ by $t=ae^u$ with
$u=-\infty$ at $0$.  If $K\ne 0$, then
$G=(\sin\sqrt{KE_q}/\sqrt{KE_q})P$, so $G$ agrees with $P$ up to
higher  order terms, and the dominant
term at $q$ in the integral
curve expression is an exponential.

Let $R_G$ be the self-adjoint {\em Ricci operator},
$R_Gv = R(G,v)G$.
We are going to establish comparisons on modified shape operators,
governed by comparisons on Ricci operators.
Since we are  interested in comparisons along two given geodesics,
each radiating from a given basepoint, the effect of restricting to
normal coordinate neighborhoods in the following proposition is
merely to rule out conjugate points along both geodesics.

\begin{prop} \label{prop:riccati}
In a semi-Riemannian manifold $M$, on a normal coordinate
neighborhood of $q$, the modified shape operator $S$ satisfies the
first-order PDE
\begin{equation}\label{eq:riccati}
\nabla_GS + S^2 - (1-Kh)S + R_G + Kdh\tensor G = 0.
\end{equation}
\end{prop}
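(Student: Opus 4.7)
The plan is to derive the Riccati equation by combining two ingredients: a computation of $\nabla_G G$ along radial geodesics, and the standard Ricci identity that expresses how the Hessian of $h$ fails to commute past covariant differentiation.

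First I would show that along any radial geodesic $\sigma$ from $q$, one has $SG = \nabla_G G = (1-Kh) G$. By the Gauss lemma, the gradient $G = \grad h$ is tangent to $\sigma$, with $G|_{\sigma(t)} = f(t)\,\sigma'(t)$ where $f$ is determined by differentiating $h\circ\sigma$ using the chain rule applied to $E_q(\sigma(t)) = t^2 E(\sigma)$ and the formula (\ref{eq:modE}). Explicitly $f(t) = \sin(t\sqrt{KE(\sigma)})/\sqrt{KE(\sigma)}$ for $K\ne 0$ (and $f(t)=t$ for $K=0$). Because $\sigma$ is a geodesic, $\nabla_G G = f f'\,\sigma' = f' G$, and $f'(t) = \cos(t\sqrt{KE(\sigma)}) = \cos\sqrt{KE_q}\bigr|_{\sigma(t)} = 1-Kh(\sigma(t))$ by the defining formula (\ref{eq:modE}). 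Thus $SG = (1-Kh)G$ pointwise on the normal neighborhood.

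Next I would use the Ricci identity on $\grad h$: for any tangent vector $v$,
\begin{equation*}
(\nabla_G S)(v) = \nabla_G(\nabla_v \grad h) - \nabla_{\nabla_G v}\grad h = \nabla_v(\nabla_G\grad h) + R(G,v)\grad h - \nabla_{\nabla_v G}\grad h,
\end{equation*}
where I have commuted the two covariant derivatives and used $[G,v] = \nabla_G v - \nabla_v G$ (torsion-free). Since $\grad h = G$, the curvature term is exactly $R_G v$, and $\nabla_{\nabla_v G}\grad h = S(Sv) = S^2 v$. This gives the intermediate identity
\begin{equation*}
(\nabla_G S)(v) = \nabla_v(SG) + R_G v - S^2 v.
\end{equation*}

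Finally I would substitute $SG = (1-Kh) G$ and expand by the Leibniz rule: $\nabla_v(SG) = -K\,dh(v)\, G + (1-Kh)\,\nabla_v G = -K\,dh(v)\,G + (1-Kh)\,Sv$. Plugging this in and rearranging yields (\ref{eq:riccati}). The only real obstacle is the first step, namely recognizing the clean identity $SG = (1-Kh)G$; once this is in hand, the rest is bookkeeping via the Ricci identity. I would also remark that on a normal neighborhood the hypothesis rules out conjugate points, so $h$ is smooth and $G$ is a well-defined smooth vector field away from $q$ (and the modified form (\ref{eq:modE}) keeps $G$ smooth across null directions, where the standard shape operator degenerates), justifying the pointwise manipulations throughout.
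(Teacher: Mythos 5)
Your argument is correct and reaches (\ref{eq:riccati}) by a genuinely different route from the paper's. The paper verifies the PDE by splitting $T_pM$ along a nonnull radial geodesic $\sigma$ into $\R\sigma'\oplus(\sigma')^\perp$: on the radial line it uses exactly your identity $SN=\nabla_NG=(1-Kh)N$ (equation (\ref{eq:radial}), obtained there by noting that $\nabla_NG$ has the same form in $M$ as in a model space), while on $(\sigma')^\perp$ it identifies $S$ with $gW$, where $W$ is the Weingarten operator of the distance spheres and $g=|\langle G,G\rangle|^{1/2}$, and then invokes the ODE machinery of Lemma \ref{lem:jacobiriccati} via (\ref{eq:generalriccati}); null radial directions are recovered by continuity. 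You instead run one coordinate-free computation, namely the radial identity $SG=(1-Kh)G$ fed into the Ricci identity for $\nabla\grad h$, valid for an arbitrary field $v$ at every point, with no case split, no Jacobi fields, and no continuity argument. Your derivation of $SG=(1-Kh)G$ from $G=f(t)\,\sigma'$ with $f(t)=\sin(t\sqrt{KE(\sigma)})/\sqrt{KE(\sigma)}$ is also fine, and since $\grad h=\phi'(E_q)\grad E_q$ with $\grad E_q=2P$, it covers the null radial geodesics directly. What you gain is a shorter, self-contained proof of the PDE itself; what you do not supply is the explicit identification of $S|_{(\sigma')^\perp}$ with the rescaled Weingarten operator $gW$ satisfying (\ref{eq:Sdef2}), which the paper needs anyway in order to apply Theorem \ref{thm:AHcompare} in Corollary \ref{cor:Scompare1}, so that identification would still have to be made elsewhere.

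One sign deserves attention. You commute derivatives via $\nabla_G\nabla_vX-\nabla_v\nabla_GX-\nabla_{[G,v]}X=R(G,v)X$, whereas the paper follows O'Neill's convention $R_{XY}Z=\nabla_{[X,Y]}Z-\nabla_X\nabla_YZ+\nabla_Y\nabla_XZ$; the latter is what makes $R_{\sigma'}$ equal to $K\langle\sigma',\sigma'\rangle I$ on $(\sigma')^\perp$ in constant curvature $K$ and makes (\ref{eq:jacobi}) the Jacobi equation. Your $R(G,v)G$ is therefore the negative of the paper's $R_Gv$, and taking your curvature term at face value your rearrangement lands on $\nabla_GS+S^2-(1-Kh)S-R_G+K\,dh\tensor G=0$. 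Switch to the paper's convention (or record that your curvature operator is $-R_G$ in the paper's notation) and you obtain (\ref{eq:riccati}) exactly.
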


Before verifying Proposition \ref{prop:riccati}, we shift to the
general setting of systems of ordinary differential equations in
order to summarize  all we need about Jacobi and Riccati equations.

\begin{lem}
\label{lem:jacobiriccati}
For  self-adjoint linear maps $R(t)$  on a semi-Euclidean space,
suppose $F(t)$ satisfies
\begin{equation}
\label{eq:jacobi}
F''(t)+R(t)F(t)=0
\end{equation}
for $t\in [0,b]$, where $F(0)=0$,\, $F'(0)$ is invertible, and $F(t)$
is invertible for all $t\in (0,b]$.
For a given function $g:[0,b]\to\R$ with $g(0)=0$,\, $g'(0)=1$, and
$g>0$ on $(0,b]$,  define $S$ by
\begin{equation}
\label{eq:Sdef2}
g(t)F'(t)=S(t)F(t) \text{ for } t\in (0,b],
\end{equation}
and
\begin{equation}
\label{eq:Sdef2(0)}
S(0)=I.
\end{equation}
Then $S$ is self-adjoint, smooth on $[0,b]$, and satisfies
\begin{equation}
\label{eq:generalriccati}
gS'+S^2-g'S+g^2R=0.
\end{equation}
\end{lem}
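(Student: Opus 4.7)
The plan is to handle the three assertions in turn: (i) smooth extension of $S$ to all of $[0,b]$ consistent with the prescribed value $S(0) = I$, (ii) self-adjointness of $S$, and (iii) the Riccati identity (\ref{eq:generalriccati}).

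For (i), the hypotheses $F(0) = 0$, $g(0) = 0$, and $g'(0) = 1$ together with Taylor's theorem let us factor $F(t) = t\tilde{F}(t)$ and $g(t) = t\tilde{g}(t)$ with $\tilde{F}$ and $\tilde{g}$ smooth on $[0,b]$, $\tilde{F}(0) = F'(0)$, and $\tilde{g}(0) = 1$. Rewriting the defining relation (\ref{eq:Sdef2}) as
$$ S(t) = g(t) F'(t) F(t)^{-1} = \tilde{g}(t) F'(t) \tilde{F}(t)^{-1} $$
on $(0,b]$, we see that the right-hand side extends smoothly to all of $[0,b]$, because $\tilde{F}$ is invertible throughout: on $(0,b]$ from the invertibility of $F(t)$, and at $t=0$ from that of $F'(0)$. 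Evaluating at $t=0$ yields $S(0) = 1 \cdot F'(0) \cdot F'(0)^{-1} = I$, matching (\ref{eq:Sdef2(0)}).

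For (ii), set $W(t) = F^*F' - (F')^*F$. A direct computation using $F'' = -RF$ and $R^* = R$ gives $W' \equiv 0$, and $W(0) = 0$ since $F(0) = 0$. Hence $F^*F' \equiv (F')^*F$, which on $(0,b]$ rearranges to
$$ S = gF'F^{-1} = g(F^*)^{-1}(F')^* = S^*; $$
smoothness from (i) extends self-adjointness to all of $[0,b]$.

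For (iii), differentiate $gF' = SF$ on $(0,b]$ to obtain $g'F' + gF'' = S'F + SF'$, substitute $F'' = -RF$ from (\ref{eq:jacobi}) and $F' = g^{-1}SF$, multiply through by $g$, collect, and right-multiply by $F^{-1}$; this yields (\ref{eq:generalriccati}) on $(0,b]$, which extends to $[0,b]$ by the continuity from (i). The main obstacle is step (i): the manipulations in (ii) and (iii) are straightforward algebra, whereas showing that $g F' F^{-1}$ admits a smooth extension to the singular endpoint $t=0$ with the prescribed value $I$ is the step that genuinely uses the invertibility hypothesis on $F'(0)$.
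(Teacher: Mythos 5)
Your proof is correct and follows essentially the same route as the paper: the Riccati identity is obtained by differentiating $gF'=SF$ and substituting the Jacobi equation, and smoothness at $t=0$ comes from the factorizations $F=t\tilde F$, $g=t\tilde g$ exactly as in the paper. The only difference is that you write out the Wronskian argument $\bigl(F^*F'-(F')^*F\bigr)'=0$ for self-adjointness, a step the paper simply delegates to Andersson--Howard; your version of that step is also correct.
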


\begin{proof}
Self-adjointness of $S$ follows from (\ref{eq:jacobi}) and
self-adjointness of $R$ (see \cite[p. 839]{AH}).
By (\ref{eq:Sdef2}) and (\ref{eq:jacobi}), on $(0,b]$ we have
\begin{align*}
S'F + g^{-1} S^2F &= S'F + SF' = g'F' + g F'' \\
        &= g'F' - g RF = g'g^{-1}SF-gRF.
\end{align*}
Multiplying the first and last expressions by $gF^{-1}$ on the right
yields (\ref{eq:generalriccati}).

On  $[0,b]$ we have $g=t\overline{g} $ where $\overline{g}(0)=g'(0)=1$, and
$F=t\overline{F} $ where $\overline{F} (0)=F'(0)$ is invertible.
Then (\ref{eq:Sdef2}) gives $t\overline{g}F'= St\overline{F} $ on $(0,b]$.
By
(\ref{eq:Sdef2(0)}),
$S=\overline{g}F'\overline{F}^{-1}$ on $[0,b]$, so $S$ is smooth  there.
\end{proof}

Comparisons of solutions of (\ref{eq:generalriccati}) will be in
terms of the notion of positive definite
and positive semi-definite self-adjoint operators \cite[p. 838]{AH}. A
linear operator $A$ on a semi-Euclidean space is {\em positive definite} if
$\langle Av, v \rangle > 0$ for every $v \ne 0$, {\em positive
semi-definite} if $\langle Av,v \rangle \ge 0$. We then write $A<B$ if
$B-A$ is positive definite, and similarly for $A \le B$. Note that the
identity map $I$ is not positive definite if the index is positive;
however, the
eigenvalues of a positive definite operator $A$ are real.  If
$A \ge 0$ and $\langle Av,v\rangle = 0$, then $Av = 0$.

In \cite[p. 846-847]{AH}, a comparison theorem for the shape operators
of tubes in semi-Riemannian manifolds is stated without proof. For
the proof of Theorem \ref{thm:comparison}
we require a stronger version of the special case in which the central
submanifolds are just points, so the shape operators of
distance-spheres are compared; the strengthening comes from the extension
to modified shape operators.
Since
it is a key result for us, we now show how this version can be
derived from a modification of
the comparison theorem proved in \cite[p. 838-841]{AH}, together with a
Taylor series argument to cover the behavior at the base-point
singularity.

\begin{thm}\label{thm:AHcompare}
Let $g$ and $R\subi,\, F\subi,\, S\subi$ ($i=1,2$) be as in Lemma
\ref{lem:jacobiriccati}, and assume $g''(0)=0$. If
$R_1(t) \le R_2(t)$ for all $t \in [0,b]$, then $S_1(t) \ge S_2(t)$
on $[0,b]$. If $S_1(b) = S_2(b)$, then
$R_1(t) =R_2(t)$ on $[0,b]$.
\end{thm}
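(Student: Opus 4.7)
The plan is to reduce the comparison to a monotonicity statement for the difference $D = S_1 - S_2$. Subtracting the Riccati equation (\ref{eq:generalriccati}) for $S_2$ from that for $S_1$, and invoking the algebraic identity $S_1^2 - S_2^2 = MD + DM$ with $M = (S_1 + S_2)/2$ (immediate from $S_i = M \pm D/2$), I obtain the Sylvester-type ODE
\begin{equation*}
gD' = -(MD + DM) + g'D + g^2(R_2 - R_1),
\end{equation*}
with $D(0)=0$ and $M$ self-adjoint.

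The key step is a conjugation trick that absorbs the Lyapunov operator $A \mapsto MA + AM$. On $(0,b]$ I would define a smooth invertible matrix $P$ by $P' = -(M/g)P$ with convenient terminal data, say $P(b) = I$, and set $V(t) = g(t)^{-1} P(t)^{-1} D(t)(P(t)^*)^{-1}$. The identities $(P^{-1})' = P^{-1}(M/g)$ and, using $M^* = M$, $((P^*)^{-1})' = (M/g)(P^*)^{-1}$ produce a cancellation of the $MD$ and $DM$ terms in $D'$, yielding
\begin{equation*}
V'(t) = P(t)^{-1}\bigl(R_2(t) - R_1(t)\bigr)(P(t)^*)^{-1},
\end{equation*}
which is positive semi-definite because $\langle V'v,v\rangle = \langle (R_2-R_1)(P^*)^{-1}v,(P^*)^{-1}v\rangle \ge 0$. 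Provided $V$ extends continuously with $V(0^+)=0$, integrating gives $V(t)\ge 0$, hence $U := gV \ge 0$, and finally $D = PUP^* \ge 0$ on $(0,b]$; continuity at $t=0$ with $D(0)=0$ extends this to $[0,b]$.

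The main obstacle, and the point of the hypothesis $g''(0)=0$, is controlling the asymptotics of $P$ and $V$ at the base-point singularity. A Taylor expansion using the Jacobi equation $F_i''+R_iF_i=0$, $F_i(0)=0$, together with $g(0)=0$, $g'(0)=1$, $g''(0)=0$ (so $g(t)=t+O(t^3)$), yields
\begin{equation*}
S_i(t) = I + t^2\bigl(\beta\,I - \tfrac{1}{3}R_i(0)\bigr) + O(t^3),
\end{equation*}
where $\beta = g'''(0)/6$ depends only on $g$. Hence $D(t) = \tfrac{t^2}{3}(R_2(0)-R_1(0)) + O(t^3)$ and $M(t)/g(t) = t^{-1}I + O(t)$. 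The ansatz $P(t)=t^{-1}Q(t)$ then reduces $P' = -(M/g)P$ to a regular ODE $Q' = O(t)Q$, so $tP(t)$ extends smoothly and invertibly to $t=0$; hence $P^{-1}(t) = O(t)$, and $V(t) = O(t^3) \to 0$.

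For the rigidity assertion, $S_1(b) = S_2(b)$ gives $D(b) = 0$ and hence $V(b) = 0$. Writing $V(b) = \int_0^b V'(s)\,ds$ and pairing against any $v$, the continuous non-negative integrand $\langle (R_2-R_1)(s)(P(s)^*)^{-1}v,(P(s)^*)^{-1}v\rangle$ integrates to zero, so it vanishes identically on $[0,b]$. Since $(P(s)^*)^{-1}$ is invertible on $(0,b]$, the vector $(P(s)^*)^{-1}v$ ranges over the whole tangent space as $v$ does, so the quadratic form of $R_2-R_1$ vanishes identically; the observation that $A\ge 0$ with $\langle Av,v\rangle\equiv 0$ forces $A=0$ then gives $R_2 - R_1 \equiv 0$.
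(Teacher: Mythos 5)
Your proposal is correct, and it reaches the conclusion by a genuinely different route from the paper. The paper follows the Andersson--Howard template: it perturbs $R_2$ to $R_\delta=R_2+\delta B$ with $B$ positive definite, uses the Taylor data $S(0)=I$, $S'(0)=0$, $S''(0)=\tfrac{1}{3}(g'''(0)I-2R(0))$ to seed the \emph{strict} inequality $S_1>S_\delta$ near $t=0$, propagates it by a first-contact contradiction argument at the first $t_0$ where $S_1-S_\delta$ fails to be positive definite (checking that the extra linear term $-g'S$ is harmless), and finally lets $\delta\to 0$; the rigidity claim is deferred to \cite{AH}. You instead subtract the two Riccati equations, identify $S_1^2-S_2^2=MD+DM$, and kill the Lyapunov term by the integrating-factor conjugation $P'=-(M/g)P$, arriving at the exact representation
\begin{equation*}
D(t)\;=\;g(t)\,P(t)\left(\int_0^t P(s)^{-1}\bigl(R_2(s)-R_1(s)\bigr)\bigl(P(s)^*\bigr)^{-1}\,ds\right)P(t)^*,
\end{equation*}
which is manifestly positive semi-definite by congruence invariance of the cone $\{A:\langle Av,v\rangle\ge 0\}$ in the indefinite inner product. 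The same Taylor data the paper uses for its strict inequality is used by you instead to tame the base-point singularity (showing $tP(t)$ solves a regular ODE, so $P^{-1}=O(t)$ and $V(0^+)=0$), and your verification of the cancellations, of $(P^*)'=-P^*(M/g)$ via $M^*=M$, and of the asymptotics is sound. What your approach buys: no perturbation, no limiting argument, and the rigidity statement falls out immediately from the vanishing of a continuous nonnegative integrand together with invertibility of $P(s)$ on $(0,b]$ --- a part of the theorem the paper does not actually prove in-house. What the paper's approach buys: it stays inside the Andersson--Howard machinery, which the authors reuse (see their closing remark) for the variants where the base is a hypersurface or a general submanifold and $S(0)$ is no longer a multiple of the identity; your conjugation argument would need the corresponding asymptotic analysis redone in those cases.
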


\begin{proof} First we show that (\ref{eq:generalriccati}) and the
initial data for $g$ imply
\begin{equation}
\label{eq:a}
S'(0) = 0
\end{equation}
and
\begin{equation}
\label{eq:b}
S''(0) = \frac{1}{3}(g'''(0)I -2R(0)).
\end{equation}
To see this, differentiate
(\ref{eq:generalriccati}),
obtaining
\begin{equation*}
g'S' + gS'' +S' S + SS' -g''S
- g'S' + (g^2R)'=0.
\end{equation*}
Applying the initial data for $g$ and $S(0)=I$ gives (\ref{eq:a}).
Now cancel the $\pm  g'S'$ terms and differentiate again:
$$g'S'' + gS''' +2{S'}^2 + S S''
+S''S  -g'''S -g''S' + (g^2R)''=0.$$
Setting $t=0$ gives (\ref{eq:b}).

Now for $\delta > 0$, let
$R_\delta = R_2 + \delta B$, where $B$ is a positive definite
self-adjoint operator, constant as a function of $t$.
The solutions $F_\delta$ of
$F''+R_\delta F=0$ with $F_\delta(0)=0$ and $F_\delta'(0)=F_2'(0)=
\overline{F_2} (0)$ depend continuously on the parameter $\delta$,
approaching the solution $F_2$ of $F''(t)+R_2F=0$. In particular,
$F_\delta(t)$ is invertible for all $t\in [0,b]$ if $\delta$ is
sufficiently small.
Define $S_\delta(t)$ as in (\ref{eq:jacobi}),\, (\ref{eq:Sdef2}) with
$R=R_\delta$.

Since  $ R_\delta (0) > R_2(0) \ge R_1(0)$, setting $S=S_\delta$ and
$S=S_1$ in (\ref{eq:b}) implies   $S_1''(0)  > S_\delta''(0)$.
Since  $S_1(0)=I  = S_\delta(0)$, and $S_1'(0)  = 0=S_\delta'(0)$ by
(\ref{eq:a}), then $S_1(t) > S_\delta(t)$ for all $t\in (0,a)$, where
$a>0$ depends on $\delta$.

But then  $S_1(t) > S_\delta(t)$ for $t \in (0,b]$. Our argument for
this follows \cite[p. 839]{AH}, except for showing that the
additional linear term in (\ref{eq:generalriccati}) is  harmless.
Namely, assume the statement is false. Then there exists
$t_0\in (a,b]$ for which  $S_1(t_0) \ge S_\delta(t_0)$, $S_1(t_0)-
S_\delta(t_0)$ is not positive
definite, and $S_1(t) > S_\delta(t)$ for $t < t_0$. Hence there is a
nonzero vector
$x_0$ such that $\langle (S_1(t_0) - S_\delta(t_0))x_0,x_0\rangle = 0$,
and so   $S_1(t_0)x_0 = S_\delta(t_0)x_0$.
For $f(t) = \langle(S_1(t) - S_\delta(t))x_0,x_0\rangle$, then by
(\ref{eq:generalriccati}),
\begin{align*}
g(t_0)f'(t_0) &= \langle(g(t_0)S_1'(t_0) - g(t_0)S_\delta'(t_0))x_0,
x_0\rangle \\
                    &= \langle S_\delta(t_0)x_0, S_\delta(t_0)x_0\rangle
                       - \langle S_1(t_0)x_0, S_1(t_0)x_0\rangle \\
                    &+\langle g'(t_0)(S_1(t_0) -
S_\delta(t_0))x_0,x_0\rangle + g(t_0)^2\langle (R_\delta(t_0) -
R_1(t_0))x_0,x_0\rangle\\
                    &=g(t_0)^2 \langle (R_\delta(t_0) -
R_1(t_0))x_0,x_0\rangle >0.
\end{align*}
This contradicts $g(t_0)f'(t_0) \le 0$, which is true because $f(t) > 0$ on
$(a,t_0)$ and $f(t_0) = 0$.

Since
$S_1(t) > S_\delta(t)$ for all $t \in (0,b]$, and $S_\delta(t) \to S_2(t)$
for all $t \in [0,b]$, we have $S_1(t) \ge S_2(t), \, t \in [0,b]$.
\end{proof}

Returning to the geometric setting, let us verify
Proposition \ref{prop:riccati}.

\emph{Proof of Proposition \ref{prop:riccati}.}
Let $N$ be the unit radial vector field tangent to nonnull geodesics
from $q$.
By continuity, it suffices to verify (\ref{eq:riccati}) at every
point that is joined to $q$
by a nonnull geodesic $\sigma$.

First we check that (\ref{eq:riccati}) holds when applied to
$\sigma'=N$.  Note that the modified shape operator $S$ satisfies
\begin{equation}
\label{eq:radial}
SN=\nabla _NG=(1-Kh)N.
\end{equation}
Indeed, the form of $\nabla _NG$ along a unitspeed radial geodesic
from the basepoint is the same in all manifolds, hence the same in
$M$ as in a model space.  But in a model space, (\ref{eq:modelS}) and
(\ref{eq:modelh}) imply $\nabla_NG=SN=K\ell_{K,q}N = (1-Kh)N$.  Therefore
\begin{align*}
(\nabla_GS + &S^2 - (1-Kh)S + R_G +
Kdh\tensor G)N\\
&= -K(Gh)N + (1-Kh)^2N - (1-Kh)^2N +0 +K(Nh)G\\
&= -Kg(Nh)N + K(Nh)gN = 0,
\end{align*}
as required.

Now we verify that (\ref{eq:riccati}) holds on
$V=V_{\sigma(t)}=\sigma'(t)^\perp$.  If $M$ has dimension $n$ and
index $k$,
consider an isometry  $\varphi : T_qM \to \R_k^n$. For a nonnull,
unitspeed geodesic $\sigma$ in $M$
radiating from $q$, identify $T_{\sigma(t)}M$ with $\R_k^n$ by
parallel translation to
the base point composed with $\varphi$.
Thus we identify linear operators on $T_{\sigma(t)}M$ and $\R_k^n$,
and likewise on  $V_{\sigma(t)}$ and the corresponding
$(n-1)$-dimensional subspace of $\R_k^n$.  If we restrict to
$V=V_{\sigma(t)}$, and set  $R=R_{\sigma'}$ and $g=1$, then
(\ref{eq:jacobi}) becomes the Jacobi equation for normal Jacobi
fields, and the operator defined by (\ref{eq:Sdef2}) is $S(t)=W(t)$,
the Weingarten operator, for $t>0$:
$$Wv=\nabla_vN,\, v\in V.$$
(See \cite{AH}, which uses the opposite sign convention for $W$.)  If
instead we set $R=R_{\sigma'}$ as before but
$g=|<G,G>|^{\frac{1}{2}}$ where $G=\grad h$, so that $G=gN$ and
$vg=0$ for $v\in V$, then the operator $S(t)$ defined by
(\ref{eq:Sdef2}) and (\ref{eq:Sdef2(0)}) is the restriction to $V$ of
the modified shape operator, for $t\ge 0$. Indeed, (\ref{eq:Sdef2})
implies $S(t)=g(t)W(t)$ for $t>0$, hence
\begin{equation*}
Sv=g\nabla_vN=\nabla_v(gN)=\nabla_vG,
\end{equation*}
which agrees with the definition (\ref{eq:Sdef1}) of the modified
shape operator.  And the modified shape operator  is the identity at
$q$ by (\ref{eq:radial}), since $N$ can be chosen to be any unit
vector at $q$. Then it is straightforward from
(\ref{eq:generalriccati}) that the restriction to $V$ of the modified
shape operator satisfies (\ref{eq:riccati}).

The proof of the rigidity statement proceeds just as in \cite[p. 840]{AH}.
\qed

\begin{rem}
To summarize, \cite[Theorem
3.2]{AH} applies to the Weingarten operator of the equidistant
hypersurfaces from a \emph{hypersurface}. In that case, both $R$ and
$W(0)$ are perturbed in order to obtain a strict inequality on
operators; if instead we
considered the modified Weingarten operator $S=gW$, so $S(0)=0$, we
would perturb $R$ and $S'(0)$.
On the other hand, Theorem \ref{thm:AHcompare} above applies to $gW$,
where $W$ is the Weingarten operator of the  equidistant
hypersurfaces from a \emph{point}. Here we
had $S(0)=I$ and $ S'(0)=0$, and showed that  merely perturbing $R$
implied a desired perturbation of $S''(0)$ and hence of $S(a)$ for
small $a$.
The theorem stated without
proof in \cite[p.846-847]{AH} applies to the intermediate case of
equidistant hypersurfaces from any submanifold $L$.  Except for
changes in details, our proof above works for  that
case as well.
\end{rem}

Now let us compare modified shape operators via  Theorem \ref{thm:AHcompare}.
We say two geodesic segments $\sigma$ and $\tilde{\sigma}$ in
semi-Riemannian manifolds $M$ and $\tilde{M}$ \emph{correspond} if
they are defined on the same affine parameter interval and satisfy
$\langle\sigma',\sigma'\rangle
=\langle\tilde{\sigma}',\tilde{\sigma}'\rangle$.

\begin{cor}
\label{cor:Scompare1}
For semi-Riemannian manifolds $M$ and $\tilde{M}$ of the same
dimension and index, suppose $\sigma$ and $\tilde{\sigma}$ are
corresponding nonnull geodesic segments radiating from the
basepoints $q\in M$ and $\tilde{q}\in\tilde{M}$ and having no
conjugate points. Identify linear operators on $T_{\sigma(t)}M$ with
those on  $T_{\tilde{\sigma}(t)}\tilde{M}$ by parallel translation to the
basepoints, together with an isometry of $T_qM$ and
$T_{\tilde{q}}\tilde{M}$ that identifies $\sigma'(0)$ and
$\tilde{\sigma}'(0)$.  If $R_{\sigma'}\ge\tilde{R}_{\tilde{\sigma}'}$
at corresponding points of $\sigma$ and  $\tilde{\sigma}$, then the modified
shape operators satisfy $S\le\tilde{S}$ at corresponding points of
$\sigma$ and  $\tilde{\sigma}$.
\end{cor}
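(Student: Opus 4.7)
The plan is to reduce the claim to Theorem \ref{thm:AHcompare} by the orthogonal decomposition of $T_{\sigma(t)}M$ along $\sigma$, treating the radial and normal directions separately. Under the given identification of $T_{\sigma(t)}M$ with $T_{\tilde\sigma(t)}\tilde M$ (parallel transport to the basepoints composed with the isometry identifying $\sigma'(0)$ with $\tilde\sigma'(0)$), the vectors $\sigma'(t)$ and $\tilde\sigma'(t)$ are identified, so the unit radial field $N=\sigma'/|\sigma'|$ and its orthogonal complement $V$ correspond to $\tilde N$ and $\tilde V$. Since $S$ and $\tilde S$ are self-adjoint, it suffices to compare $S$ with $\tilde S$ separately on $N$ and on $V$.

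On the radial direction, formula (\ref{eq:radial}) from the proof of Proposition \ref{prop:riccati} gives $SN=(1-Kh)N$, and likewise $\tilde S\tilde N=(1-K\tilde h)\tilde N$. But by (\ref{eq:modE}), $h$ along $\sigma$ is a function of $E(\sigma)=\langle\sigma',\sigma'\rangle$ and of the affine parameter alone, and by definition of corresponding geodesics this function agrees with $\tilde h$ along $\tilde\sigma$. Hence $S$ and $\tilde S$ agree on the radial line under the identification; in particular, the desired inequality is an equality there.

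On the normal subspace $V$, the proof of Proposition \ref{prop:riccati} establishes that the restriction $S|_V$, viewed as a self-adjoint operator on the fixed $(n-1)$-dimensional space obtained by parallel translation to $q$, satisfies the ODE system (\ref{eq:generalriccati}) of Lemma \ref{lem:jacobiriccati} with $R=R_{\sigma'}|_V$, $g(t)=|\langle G,G\rangle|^{1/2}$ along $\sigma$, and initial data $S(0)=I$, $S'(0)=0$. The same holds for $\tilde S|_{\tilde V}$ with $\tilde R=\tilde R_{\tilde\sigma'}|_{\tilde V}$ and $\tilde g$. Since $g$ is determined by $h$, which along $\sigma$ depends only on $E(\sigma)$, we have $g=\tilde g$; so both operators satisfy (\ref{eq:generalriccati}) with the \emph{same} scalar factor $g$ and the same initial conditions. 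The remaining hypothesis of Theorem \ref{thm:AHcompare}, namely $g''(0)=0$, is read off from the Taylor expansion of $G$ at $q$ recorded just before Proposition \ref{prop:riccati}: in integral curve form the dominant behavior is $G\sim P$ (plus higher-order terms of even parity in the radial parameter), whence $g(t)=t+O(t^3)$ on a unit-speed nonnull ray. Applying Theorem \ref{thm:AHcompare} with $R_1=\tilde R_{\tilde\sigma'}|_{\tilde V}$ and $R_2=R_{\sigma'}|_V$ (so that the hypothesis $R_1\le R_2$ is exactly $R_{\sigma'}\ge\tilde R_{\tilde\sigma'}$) yields $\tilde S|_{\tilde V}\ge S|_V$, i.e. $S\le\tilde S$ on $V$. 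Combined with the radial agreement, this gives $S\le\tilde S$ at every corresponding point.

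The main obstacle is the bookkeeping at the basepoint $q$: both $h$ and $g$ degenerate there, and the Riccati-style equation \eqref{eq:riccati} is set up in terms of $\nabla_G$ rather than an affine derivative, so one must carefully confirm that the affine-parameter reformulation used to invoke Lemma \ref{lem:jacobiriccati} carries the correct initial data $S(0)=I$, $S'(0)=0$ and satisfies $g''(0)=0$. Once these are in hand, the rest of the argument is essentially a clean translation between the two manifolds under the prescribed identification and an application of the comparison theorem.
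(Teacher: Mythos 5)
Your proposal is correct and follows essentially the same route as the paper: split $S$ into its radial and normal summands, observe the radial parts coincide because $h$ along corresponding geodesics depends only on $E(\sigma)$ and the affine parameter, and apply Theorem \ref{thm:AHcompare} to the normal summand with $g=|\langle G,G\rangle|^{1/2}$. The only cosmetic difference is your verification of $g''(0)=0$ via the Taylor expansion $g(t)=t+O(t^3)$, where the paper instead reads it off from $g'=1-Kh$ together with (\ref{eq:modE}); both are valid.
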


\begin{proof}
The modified shape operators split into direct summands,
corresponding to their action on the one-dimensional spaces tangent
to the radial geodesics and on the orthogonal complements $V$.  The
first summand is the same for both $M $ and $\tilde{M}$.  The  second
summand is as described in Lemma \ref{lem:jacobiriccati} with
$R=R_{\sigma'}$ and  $g=|<G,G>|^{\frac{1}{2}}$.  (Since our
identification of $T_{\sigma(t)}M$ and
$T_{\tilde{\sigma}(t)}\tilde{M}$ identifies $G$ and $\tilde{G}$, we
denote both of these by $G$.) Furthermore, $g'=1-Kh$ by
(\ref{eq:radial}),  so $g''(0)=0$ by (\ref{eq:modE}).   Therefore the
corollary follows from Theorem \ref{thm:AHcompare}.
\end{proof}

\begin{cor}
\label{cor:Scompare2}
Suppose $M$ is a semi-Riemannian manifold satisfying $R\ge K$, and
$\tilde{M}=Q_K$ has the same dimension and index as $M$ and constant
curvature $K$. Then for any $p\in M$ that is joined to $q$ by a
geodesic that has no conjugate points and such that a corresponding
geodesic segment in $\tilde{M}$ has no conjugate points, the
modified shape operator $S=S_{K,q}$ satisfies
\begin{equation}
\label{eq:Scompare2}
S(p)\le (1 - Kh_{K,q} (p))\cdot I.
\end{equation}
The same statement holds with inequalities reversed.
\end{cor}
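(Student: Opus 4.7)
The strategy is to read this as the specialization of Corollary \ref{cor:Scompare1} to $\tilde M = Q_K$, for which the modified shape operator is explicitly scalar.

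First I would translate the curvature hypothesis $R \ge K$ into the Ricci-operator hypothesis required by Corollary \ref{cor:Scompare1}. Equation (\ref{eq:def}) rewrites as
$$\langle R_{\sigma'}v,v\rangle = R(\sigma',v,\sigma',v) \ge K\bigl(\langle\sigma',\sigma'\rangle\langle v,v\rangle - \langle\sigma',v\rangle^2\bigr)$$
for every tangent $v$, while on the constant-curvature model $Q_K$ equality holds. With corresponding geodesics $\sigma$ in $M$ and $\tilde\sigma$ in $\tilde M$ identified by parallel translation to the basepoints together with an initial-velocity-preserving isometry of tangent spaces, this gives $R_{\sigma'} \ge \tilde R_{\tilde\sigma'}$ pointwise along $\sigma$ in the self-adjoint sense of \S\ref{sec:comparericcati}. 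Corollary \ref{cor:Scompare1} then yields $S(p) \le \tilde S(\tilde p)$ at corresponding points, for any nonnull $\sigma$ free of conjugate points whose model-space counterpart is also free of conjugate points.

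Next I would identify $\tilde S$ on the model. By (\ref{eq:modelS}), Proposition \ref{prop:Kaffine}, and (\ref{eq:modelh}), the modified shape operator in $Q_K$ equals $\tilde S = K\ell_{K,q}\cdot I = (1 - Kh_{K,q})\cdot I$. Since $h_{K,q}$ is a universal function of $E_q$ by (\ref{eq:modE}) and corresponding points share the same value of $E_q$, the scalar $(1 - Kh_{K,q}(p))$ matches under the identification, and the inequality (\ref{eq:Scompare2}) follows on the open set where $p$ is joined to $q$ by a nonnull geodesic. The case of a null connecting geodesic I would handle by continuity: since $S$ is the self-adjoint form of the Hessian of $h_{K,q}$, which is smooth on the entire normal neighborhood (a central motivation for using $S$ rather than the standard Weingarten operator), and the bound (\ref{eq:Scompare2}) is a closed condition on $S(p)$, I can approximate $\sigma$ by nonnull geodesics $\sigma_n$ from $q$ to points $p_n\to p$ inside a smaller normal neighborhood, apply the nonnull argument to each, and pass to the limit. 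The $R\le K$ case is identical with all inequalities reversed.

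The main obstacle is the null-cone step, specifically arranging that both $\sigma_n$ and its corresponding segment in $Q_K$ remain conjugate-point-free. This is essentially routine: conjugate-point-freeness and nondegeneracy of the initial velocity are open conditions on $C^\infty$ geodesic data, and the conjugate structure of $Q_K$ is completely explicit, so a sufficiently small perturbation of $\sigma'(0)$ into the nonnull region yields admissible geodesics in both manifolds simultaneously.
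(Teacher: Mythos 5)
Your proof is correct and follows essentially the same route as the paper's: specialize Corollary \ref{cor:Scompare1} to $\tilde M = Q_K$, identify $\tilde S = K\ell_{K,\tilde q}\cdot I = (1-Kh_{K,q})\cdot I$ via (\ref{eq:modelS}) and (\ref{eq:modelh}), and extend across the null cone by continuity. Your elaboration of the continuity step (openness of conjugate-point-freeness and the explicit conjugate structure of $Q_K$) only fills in what the paper dispatches with the single phrase ``by continuity,'' so there is no substantive difference.
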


\begin{proof} Let $\sigma$ be the given geodesic from $q$ to
$p=\sigma(t)$, and $\tilde{\sigma}$ be a corresponding geodesic from
$\tilde{q}\in \tilde{M}$ to $\tilde{p}=\tilde{\sigma}(t)$. If
$\sigma$ is nonnull, then by Corollary \ref{cor:Scompare1},
(\ref{eq:modelS}) and (\ref{eq:modelh}), we have
\begin{align*}
S(p) \le \tilde{S}(\tilde{p})
&= K\ell_{K,\tilde{q}}(\tilde{p})\cdot \tilde{I}\\
&=(1 - K\tilde{h}_{K,\tilde{q}} (\tilde{p}))\cdot \tilde{I },
\end{align*}
where $\tilde{I}$ denotes the identity  operator on
$T_{\tilde{p}}\tilde{M}$, and $T_pM,\, T_{\tilde{p}}\tilde{M}$ are
identified by parallel translation to $q,\,\tilde{q}$ followed by an
isometry identifying $\sigma'(0),\, \tilde{\sigma}'(0)$. Corollary
\ref{cor:Scompare1} applies here because the righthand side of
(\ref{eq:def}) is $\tilde{R}(v,w,v,w)$,
and so $R_{\sigma'}\le\tilde{R}_{\tilde{\sigma}'}$
at corresponding points of $\sigma$ and  $\tilde{\sigma}$. Since
$\tilde{h}_{K,\tilde{q}}(\tilde{p})=
h_{K,q}(p)$, then (\ref{eq:Scompare2}) holds at $p$. Therefore
(\ref{eq:Scompare2}) holds everywhere  by continuity.
\end{proof}


\section{Proof of Theorem
\ref{thm:comparison}}\label{sec:comparison}

Now we are ready to prove that in a semi-Riemannian manifold $M$,
triangle comparisons hold in any normal neighborhood
$U$ in which there is a curvature bound $K$
and triangles satisfy size bounds for $K$. By the Realizability
Lemma, such a $\triangle pqr$ has a
model triangle $\triangle \tilde{p}\, \tilde{q} \,\tilde{r}$, which
in this section we embed in $
Q_K$,
where $Q_K$ is taken of the same dimension
and index as $U$.

There are several equivalent formulations of the triangle comparisons we seek:

\begin{prop}\label{prop:equivcomp}
The following conditions on all triangles in $U$ are equivalent:
\begin{list}
{\emph{\arabic{step}.}}
{\usecounter{step}
\setlength{\rightmargin}{7mm}\setlength{\leftmargin}{7mm}}
\item The
signed distance between any two points is  $\ge$ ($\le$) the
signed distance between the corresponding points in the model
triangle.
\item The
signed distance from  any vertex to any point on the opposite side is
$\ge$ ($\le$) the
signed distance between the corresponding points in the model
triangle.
\item The nonnormalized angles are $\le$ ($\ge$) the
corresponding nonnormalized angles of the model triangle.
\end{list}
\end{prop}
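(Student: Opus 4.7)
The plan is to close the cycle $(1)\Rightarrow(2)\Rightarrow(3)\Rightarrow(2)\Rightarrow(1)$. The first implication is immediate, since (2) is the special case of (1) in which one of the two points is a vertex. I describe the remaining steps in the case corresponding to $R\ge K$ (the first inequality in each item); the reversed case is symmetric.

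For $(2)\Rightarrow(3)$, I would apply the first variation of energy. Fix a vertex, say $q$, and set $m(\lambda)=\gamma_{qr}(\lambda)$. Since the starting point is fixed and the endpoint varies with velocity $\gamma'_{qr}(0)$, the first variation gives
\begin{equation*}
E(p,m(\lambda))=E(p,q)-2\lambda\,\angle pqr+O(\lambda^2),
\end{equation*}
with the analogous expansion in the model involving $\angle\tilde p\tilde q\tilde r$. Since $E(p,q)=E(\tilde p,\tilde q)$, and the signed-length comparison in (2) is equivalent to $E(p,m)\ge E(\tilde p,\tilde m)$, subtracting, dividing by $\lambda>0$, and letting $\lambda\to 0^+$ forces $\angle pqr\le\angle\tilde p\tilde q\tilde r$; the remaining vertices are handled symmetrically.

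For $(3)\Rightarrow(2)$, let $m$ lie on $qr$ at parameter $\lambda$ and form model subtriangles $T_1,T_2$ of curvature $K$ matching $\triangle pqm$ and $\triangle pmr$ in all sidelengths; they share the common side of length $|pm|$. Applying (3) to the $M$-subtriangles, together with the identity $(1-\lambda)\angle qmp+\lambda\angle rmp=0$ (which reflects that $\gamma_{mq}$ and $\gamma_{mr}$ are oppositely oriented reparametrizations of a single geodesic), yields
\begin{equation*}
(1-\lambda)\,\angle\bar q_1\bar m_1\bar p_1+\lambda\,\angle\bar r_2\bar m_2\bar p_2\ge 0.
\end{equation*}
The analogous weighted combination of shoulder angles for the two halves of the full model triangle $\triangle\tilde p\tilde q\tilde r$ (with common side $d_\lambda=|\tilde p\tilde m|$) vanishes by the same identity. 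Suppose for contradiction that $|pm|<d_\lambda$. By Lemma~\ref{lem:hinge}.2 applied to each pair, holding fixed the two sides adjacent to $\bar q_i$ (resp.\ $\bar r_i$), each shoulder angle at $\bar m_i$ in $T_i$ is strictly smaller than its counterpart at $\tilde m$ in the full-model halves; the weighted sum above therefore drops strictly below $0$, a contradiction. Hence $|pm|\ge d_\lambda$.

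Finally, for $(2)\Rightarrow(1)$, it suffices to treat $x\in pq$, $y\in pr$ (the case where $x,y$ lie on a common side is trivial, and $x\in pq$, $y\in qr$ is handled analogously). Apply (2) twice: to $\triangle pqr$ at vertex $r$ with point $x$ to get $|xr|\ge|\tilde x\tilde r|$, and to $\triangle pxr$ at vertex $x$ with point $y$ to get $|xy|\ge|\bar x\bar y|$ in the model of $\triangle pxr$. Lemma~\ref{lem:hinge}.1 applied to the model of $\triangle pxr$ and the subtriangle $\triangle\tilde p\tilde x\tilde r$ of the full model (both sharing sides $|px|,|pr|$ at $p$) then gives $\angle\bar x\bar p\bar r\le\angle\tilde x\tilde p\tilde r$; scaling by the parameter of $y$ on $pr$ yields $\angle\bar x\bar p\bar y\le\angle\tilde x\tilde p\tilde y$, and a second use of Lemma~\ref{lem:hinge}.1 on triangles with sides $|px|,|py|$ converts this angle bound into $|\bar x\bar y|\ge|\tilde x\tilde y|$, closing the chain. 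The main obstacle is $(3)\Rightarrow(2)$: the Straightening Lemma~\ref{lem:snap} supplies almost exactly the needed comparison, but its matching condition $|\bar q_i\bar m_i|=|\tilde q\tilde m|$ presumes knowledge of the very distance we wish to estimate; the contradiction argument above circumvents this by extracting the required strict inequality from a hypothetical $|pm|\ne d_\lambda$ via the precise monotonicity in Lemma~\ref{lem:hinge}.2.
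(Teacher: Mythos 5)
Your proof is correct and follows the paper's route almost exactly: the same cycle of implications, with $1\Rightarrow 2$ trivial, the first variation of $E$ giving $2\Rightarrow 3$, and chains of Hinge Lemma (Lemma~\ref{lem:hinge}) applications giving $2\Rightarrow 1$ and $3\Rightarrow 2$. The one genuine divergence is that in $(3)\Rightarrow(2)$ the paper cites the Straightening Lemma (Lemma~\ref{lem:snap}) followed by one more Hinge application, whereas you re-derive its content inline by contradiction. Your argument is precisely the contrapositive of the proof of Lemma~\ref{lem:snap}: there, the nonnegative weighted sum forces at least one shoulder angle at $m_i$ to exceed its model counterpart, and Lemma~\ref{lem:hinge}.2 then pushes the common cevian length up to at least $|\tilde q \tilde m|$; you instead suppose the cevian too short and push both shoulder angles down. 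The only cost of your version is that it needs the monotonicity in Lemma~\ref{lem:hinge}.2 to be \emph{strict} (otherwise a short cevian could still leave the weighted sum equal to $0$); strictness does hold --- for $K=0$ the shoulder angle is an affine function, with slope $1/2$, of the $E$-value of the varying side, and the $K\ne 0$ cases reduce to this --- but you should say so. Finally, your parenthetical objection to the hypothesis $|q_i m_i|=|\tilde q\tilde m|$ in Lemma~\ref{lem:snap} is a fair reading of the statement as printed (taken literally it makes the subtriangles congruent to the halves of the subdivided model triangle and the conclusion vacuous), and indeed the paper's own application could not satisfy it; the hypothesis actually used in that lemma's proof, and intended, is only $|q_1m_1|=|q_2m_2|$ with the remaining five sides pinned. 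Under that reading the lemma applies directly and your detour is unnecessary, though entirely sound.
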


\begin{proof}
$1$ obviously implies $2$.   Conversely, for $\triangle pqr$ in $U$,
suppose $m$ is on side $\gamma_{pr}$ and $n$ is on side
$\gamma_{pq}$, and  $\lambda_m$ and $\lambda_n$ are the corresponding
affine parameters.  Let $\triangle \tilde{p}\,\tilde{q}\tilde{r}$ be
the model triangle for $\triangle pqr$, $\triangle
\tilde{p}'\,\tilde{m}'\tilde{q}'$ be the model triangle for
$\triangle pmq$, and
$\triangle\overline{p}\,\overline{m}\,\overline{n}$ be the model
triangle for $\triangle pmn$.
Let $\tilde{m}$ on $\gamma_{\tilde{p}\,\tilde{r}}$ and $\tilde{n}$ on
$\gamma_{\tilde{p}\,\tilde{q}}$ have affine parameters $\lambda_m$
and $\lambda_n$, and similarly for $\tilde{n}'$ on
$\gamma_{\tilde{p}'\,\tilde{q}'}$. By 2,
$|\overline{m}\,\overline{n}|=|mn|\ge |\tilde{m}'  \tilde{n}'|$.
Therefore by Lemma \ref{lem:hinge}.1 (Hinge),
\begin{equation}\label{eq:comp1}
\angle \overline{m}\,\overline{p}\,\overline{n}
\le \angle\tilde{m}'\,\tilde{p}'\,\tilde{n}'.
\end{equation}
Again by 2, $|\tilde{m}'\,\tilde{q}'|=|mq|\ge
|\tilde{m}\,\tilde{q}|$.  By Hinge
applied to $\triangle pmq$, together with (\ref{eq:comp1}), we have
\begin{equation}
\angle \overline{m}\,\overline{p}\,\overline{n}
\le \angle\tilde{m}'\,\tilde{p}'\,\tilde{n}'
\le \angle\tilde{m}\,\tilde{p}\,\tilde{n}.
\end{equation}
Again by Hinge, $|mn|=|\overline{m}\,\overline{n}|\ge
|\tilde{m}\,\tilde{n}|$, and so 2 implies 1.

The implication $2 \Rightarrow 3$ is a direct consequence of the
first variation formula (see \cite[p. 289]{O'N}):
\begin{equation}
(E_q\circ\gamma_{pr})'(0)=2\angle qpr.
\end{equation}
(Note that our definition of $E$ and O'Neill's differ by a factor of $2$.)

Conversely, using the same triangle notation as above, 3 gives
$\angle pmq\le \angle
\tilde{p}'\,\tilde{m}'\,\tilde{q}'$, and similarly
$\angle qmr \le \angle
\tilde{q}'\,\tilde{m}'\,\tilde{r}'$.
Since $(1-\lambda_m)\,\angle
pmq + \lambda\,\angle
qmr =0$,
we have $(1-\lambda_m)\angle
\tilde{p}'\,\tilde{m}'\,\tilde{q}'
+ \lambda_m \angle
\tilde{q}'\,\tilde{m}'\,\tilde{r}'\ge 0$.  By Lemma \ref{lem:snap}
(Straightening), $\angle\widetilde{q}'\,\widetilde{p}'
\widetilde{m}' \le \angle
\widetilde{q}\,\widetilde{p}\widetilde{m} $.  Therefore by Hinge,
$|qm|=|\widetilde{q}'\,\widetilde{m}'| \ge
|\widetilde{q}\,\widetilde{m}|$, and so 3 implies 2.
\end{proof}

Turning to the proof of Theorem \ref{thm:comparison}, consider
$\triangle pqr$ in $U$, and its model triangle
$\tilde{p}\,\tilde{q}\,\tilde{r}$, which we regard as lying in
$\tilde{M}=Q_K$.  Taking $q$ and $\tilde{q}$ as base points gives
modified distance functions $ h_{K,q} $ and $\tilde{h}_{K,\tilde{q}}
$.  For any $m \in U$, the signed distance $|qm|$ is a monotone
increasing function of $h_q(m)$, and distances from $\tilde{q}$ in
$Q_K$ have exactly the same relation with
$\tilde{h}_{K,\tilde{q}}$. Thus the following proposition shows that
curvature bounds imply triangle comparisons in the sense of
Proposition \ref{prop:equivcomp}.2, thereby proving the ``only if''
part of Theorem \ref{thm:comparison}.

\begin{prop} \label{prop:onlyif}
Set $h=h_{K,q} \circ\gamma_{pr}$ and $\tilde{h}
=\tilde{h}_{K,\tilde{q}} \circ\tilde{\gamma}_{\tilde{p}\tilde{r}}$.

If $R \ge K$ in $U$, then $h \ge \tilde{h} $.

If $R\le K$ in $U$, then $h \le \tilde{h}$.
\end{prop}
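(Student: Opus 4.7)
The plan is to derive a differential inequality for $h$ along $\gamma_{pr}$, observe that $\tilde h$ satisfies the corresponding equality along $\tilde\gamma_{\tilde p\tilde r}$, match boundary values, and conclude by a Sturm-type comparison made possible by the size bounds. For any geodesic $\gamma$ in $U$, $(h\circ\gamma)''(t)=\langle\nabla_{\gamma'}\grad h,\gamma'\rangle=\langle S\gamma',\gamma'\rangle$, using $\nabla_{\gamma'}\gamma'=0$ and the definition $Sv=\nabla_v\grad h$ of the modified shape operator $S=S_{K,q}$. Corollary \ref{cor:Scompare2}, applied at the point $\gamma_{pr}(t)$ (joined to $q$ by the distinguished radial geodesic in the normal neighborhood $U$, which is normal for $K$), delivers the operator inequality $S(\gamma_{pr}(t))\le(1-Kh(t))\,I$. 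Evaluating on $v=\gamma_{pr}'(t)$ and using $\langle\gamma_{pr}',\gamma_{pr}'\rangle=E(\gamma_{pr})$ yields the key inequality
\begin{equation*}
(h\circ\gamma_{pr})''+KE(\gamma_{pr})\,h\le E(\gamma_{pr}).
\end{equation*}

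By (\ref{eq:modKaffine}) in Section \ref{sec:modE}, the model function satisfies the corresponding equality $(\tilde h\circ\tilde\gamma_{\tilde p\tilde r})''+KE(\tilde\gamma_{\tilde p\tilde r})\tilde h=E(\tilde\gamma_{\tilde p\tilde r})$. Since $\gamma_{pr}$ and $\tilde\gamma_{\tilde p\tilde r}$ correspond, $E(\gamma_{pr})=E(\tilde\gamma_{\tilde p\tilde r})=:E_0$. Because $h_{K,q}$ depends only on $E_q$ by (\ref{eq:modE}), and the model triangle reproduces $|pq|$ and $|qr|$, one has $h(0)=\tilde h(0)$ and $h(1)=\tilde h(1)$. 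Subtracting the equality from the inequality leaves a two-point Dirichlet problem for $f:=h-\tilde h$ on $[0,1]$:
\begin{equation*}
f''+KE_0\,f\le 0,\qquad f(0)=f(1)=0.
\end{equation*}

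Let $u$ solve $u''+KE_0 u=0$ with $u(0)=0$, $u'(0)=1$. I claim the size bounds force $u>0$ on $(0,1]$: when $KE_0>0$, $u(t)=\sin(\sqrt{KE_0}\,t)/\sqrt{KE_0}$ first vanishes at $\pi/\sqrt{KE_0}>1$, since a quick case-check over the two scenarios of Lemma \ref{lem:realiz} (namely $K>0$ with $\gamma_{pr}$ spacelike, or $K<0$ with $\gamma_{pr}$ timelike) gives exactly $\sqrt{|K|}\,|pr|<\pi$; when $KE_0\le 0$, $u$ is the (monotone increasing) $\sinh$ analogue or the linear function $t$. Then $(f'u-fu')'=(f''+KE_0 f)u\le 0$ with $(f'u-fu')(0)=0$, so $f'u-fu'\le 0$ on $[0,1]$. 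Dividing by $u^2>0$ on $(0,1]$ gives $(f/u)'\le 0$, and $f(1)/u(1)=0$ then yields $f/u\ge 0$, whence $f\ge 0$ on $[0,1]$, as required. Reversing every inequality handles the $R\le K$ case. The only real obstacle is bookkeeping the size-bound case analysis uniformly in the signs of $K$ and $E_0$ (including the null case $E_0=0$); the geometric content is already packaged into Corollary \ref{cor:Scompare2}.
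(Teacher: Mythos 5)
Your proposal is correct and follows essentially the same route as the paper: Corollary \ref{cor:Scompare2} gives the operator inequality, evaluating on $\gamma_{pr}'$ yields $h''+KE(\gamma_{pr})h\le E(\gamma_{pr})$, the model equality (\ref{eq:modKaffine}) and matching endpoint values reduce everything to a $KE_0$-concave function vanishing at $0$ and $1$. The only difference is that you spell out, via the Wronskian/Sturm argument and the size bounds, the final step that the paper dispatches in one sentence ("by concavity $f$ is bounded below by the $KE(\gamma)$-affine function with those end values"), which is a welcome but not substantively different elaboration.
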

\begin{proof}
Assume $R \ge K$. Aside from reversing inequalities the proof for $R\le K$
is just the same.

Set $\gamma=\gamma_{pr}$ and
$\tilde{\gamma}=
\tilde{\gamma}_{\tilde{p}\tilde{r}}$.
For $m = \gamma(s)$, by
Corollary \ref{cor:Scompare2}, the modified shape operator
$S=S_{K,q}$ satisfies
\begin{equation*}
S(m) \le (1 - Kh_{K,q} (m))\cdot I.
\end{equation*}

Since, by definition,
    $\langle Sv,v\rangle$ is the second
derivative of $h_{K,q} $ along the geodesic with velocity $v$, then
\begin{equation*}
           (h_{K,q} \circ\gamma)''(s)
                     \le (1 - Kh_{K,q} (m)) \langle\gamma'(s), 
\gamma'(s)\rangle.
\end{equation*}
That is, along $\gamma$, $h_{K,q} $ satisfies the differential inequality
$$h'' + KE(\gamma)h \le E(\gamma).$$
On the other hand, the above inequalities become equations in $Q_K$, so
$$\tilde{h}'' + KE(\tilde{\gamma})\tilde{h}= E(\tilde{\gamma}).$$
But $E(\tilde{\gamma}) = E(\gamma)$ since $\tilde{\gamma}$ is a model
segment for $\gamma$. Hence the difference
$f = h - \tilde{h}$
is $KE(\gamma)$-concave:
$$f'' + KE(\gamma)f \le 0.$$
Moreover, at $0$ and $1$ the values of $h$ and $\tilde{h}$ are the same
since $E_q(p) = E_{\tilde{q}}(\tilde{p})$ and
$E_q(r) = E_{\tilde{q}}(\tilde{r})$, so the end values of $f$ are just
$f(0) = f(1) = 0$. By concavity $f$ is bounded below by the
$KE(\gamma)$-affine function with those end values, which is just
$0$.  That is,  $f \ge 0$, or
$h \ge \tilde{h}$.
\end{proof}

Next  we verify the ``if'' part of Theorem \ref{thm:comparison}:

\begin{prop}
If signed distances between pairs of points on any triangle in $U$
are at least
(at most)
those between
the corresponding points of the comparison triangle, then
$R \ge K$ ($R\le K$).
\end{prop}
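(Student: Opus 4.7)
The plan is to extract the pointwise curvature bound at an arbitrary $q\in M$ from the angle-comparison formulation of Proposition \ref{prop:equivcomp}.3, applied to a family of vanishingly small triangles with vertex $q$, by Taylor-expanding the model nonnormalized angle in the size parameter and identifying the first nonzero correction with the curvature tensor.

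Fix $q\in M$ and arbitrary tangent vectors $v,w\in T_qM$.  For $\epsilon>0$ small enough, the triangle $\triangle q p_\epsilon r_\epsilon$ with $p_\epsilon=\exp_q(\epsilon v)$ and $r_\epsilon=\exp_q(\epsilon w)$ lies in a normal neighborhood where triangle comparisons are assumed, and its sidelengths satisfy any prescribed size bound for $K$.  By the very definition of nonnormalized angle, the angle at $q$ in $M$ is exactly $\epsilon^2\langle v,w\rangle$, while the two sides from $q$ have $E$-values $\epsilon^2\langle v,v\rangle$ and $\epsilon^2\langle w,w\rangle$.  The third sidelength obeys the signature-independent normal-coordinate expansion
\begin{equation*}
E(p_\epsilon,r_\epsilon)=\epsilon^2\langle v-w,v-w\rangle-\tfrac{1}{3}\epsilon^4 R(v,w,v,w)+O(\epsilon^5),
\end{equation*}
coming from $g_{ij}(X)=g_{ij}(q)-\tfrac{1}{3}R_{ikjl}(q)X^kX^l+O(|X|^3)$ and the geodesic equation.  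Substituting these three sidelengths into the model law of cosines, namely (\ref{eq:ip1}) when $K=0$ or (\ref{eq:coslaw}) when $K\ne 0$ (with $\cos$ and the normalized sine read as $\cosh$ and normalized $\sinh$ whenever their arguments are imaginary, as in (\ref{eq:modE})), and expanding in $\epsilon$ gives, after the $\epsilon^2$ terms cancel,
\begin{equation*}
\angle\tilde{p}_\epsilon\tilde{q}\tilde{r}_\epsilon=\epsilon^2\langle v,w\rangle+\tfrac{1}{6}\epsilon^4\bigl[R(v,w,v,w)-K(\langle v,v\rangle\langle w,w\rangle-\langle v,w\rangle^2)\bigr]+O(\epsilon^5).
\end{equation*}

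The angle comparison $\angle p_\epsilon q r_\epsilon\le\angle\tilde{p}_\epsilon\tilde{q}\tilde{r}_\epsilon$ (respectively $\ge$) coming from the hypothesis then forces the bracketed coefficient to be nonnegative (respectively nonpositive) after dividing by $\epsilon^4/6$ and letting $\epsilon\to 0$.  Since $v,w$ were arbitrary, this is exactly the curvature inequality (\ref{eq:def}), proving $R\ge K$ (respectively $R\le K$).  The main technical step is verifying the coefficient $-\tfrac{1}{3}$ in the expansion of $E(p_\epsilon,r_\epsilon)$ together with the matching $\tfrac{1}{6}$ in the model-angle expansion, uniformly in the causal characters of $v$, $w$, $v-w$ and in the sign of $K$; these are routine perturbative calculations, and any exceptional configurations (for instance both $v$ and $w$ null) are absorbed by continuity, since both sides of (\ref{eq:def}) are polynomial in $v,w$.
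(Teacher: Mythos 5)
Your proof is correct, and it is a genuinely different implementation of the converse from the one in the paper, although both ultimately extract the curvature from the fourth-order term of a Taylor expansion on infinitesimal configurations. The paper fixes a nonnull geodesic $\sigma$ and a nonnull perpendicular vector $v$, forms the family $\sigma_s(t)=\exp_{\sigma(0)}t(\sigma'(0)+sv)$, and uses the angle formulation (Proposition \ref{prop:equivcomp}.3) \emph{together with} the Hinge Lemma \ref{lem:hinge}.1 to conclude $|\sigma_0(t)\sigma_s(t)|\le|\tilde\sigma_0(t)\tilde\sigma_s(t)|$; letting $s\to 0$ turns this into $\langle J(t),J(t)\rangle\le\langle\tilde J(t),\tilde J(t)\rangle$ for the Jacobi field $J$ with $J(0)=0$, $J'(0)=v$, and differentiating the Jacobi equation gives $\langle J(t),J(t)\rangle=\langle v,v\rangle t^2-\tfrac13\langle R_{\sigma'(0)v}\sigma'(0),v\rangle t^4+O(t^5)$, from which comparison of the $t^4$ coefficients yields (\ref{eq:def}) for orthogonal pairs spanning nonnull sections, which suffices. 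You instead shrink the whole triangle uniformly in $\epsilon$ and compare included nonnormalized angles at the apex directly, replacing the Jacobi-field expansion by the normal-coordinate expansion of $E(p_\epsilon,r_\epsilon)$ and the law of cosines. What your route buys: you need only Proposition \ref{prop:equivcomp}.3 and not the Hinge Lemma, and you land on (\ref{eq:def}) for arbitrary $v,w$ in one stroke, with exceptional causal configurations correctly disposed of by density and polynomiality. What the paper's route buys: its fourth-order input is the elementary ODE computation $J'''(0)=-R_{\sigma'(0)v}\sigma'(0)$ along a single geodesic, whereas your fourth-order input is the expansion of $E(p_\epsilon,r_\epsilon)$, which you label routine but which is the real technical content of your argument and deserves the short derivation from $g_{ij}(x)=g_{ij}(0)-\tfrac13 R_{ikjl}(q)x^kx^l+O(|x|^3)$ together with the observation that the chord computes the energy to the required order; note also that your $\tfrac16$ coefficient in the model-angle expansion can be confirmed at no cost by applying your $E$-expansion inside the model space itself, where $R(v,w,v,w)=K(\langle v,v\rangle\langle w,w\rangle-\langle v,w\rangle^2)$, and inverting.
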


\begin{proof}
Let $\sigma$ be a nonnull geodesic segment in $U$, let $v \in T_{\sigma(0)}M$
be nonnull and perpendicular to $\sigma'(0)$, and let $J$ be the Jacobi field
along $\sigma$ such that $J(0) = 0, J'(0) = v$. In the $2$-dimensional
model space $\tilde M$ of
curvature $K$ and of the same signature as the section spanned by
$\sigma'(0)$ and $v$,
choose a geodesic $\tilde\sigma$ and vector $\tilde v$ at $\tilde\sigma(0)$
perpendicular to $\sigma'(0)$ such that $\langle\tilde\sigma'(0),
\tilde\sigma'(0)\rangle = \langle\sigma'(0), \sigma'(0)\rangle$ and
$\langle \tilde v, \tilde v\rangle = \langle v, v \rangle$.  Let
$\tilde J$ be
the Jacobi field on $\tilde\sigma$ such that $\tilde J(0) = 0, \tilde
J'(0) = v$.

Write $$\tau(t,s)=\sigma_s(t)
=\exp_{\sigma(0)}t(\sigma'(0)+sv),$$
and similarly for $\tilde{\tau}$.  Since
$\frac{\partial\tau}{\partial t}(0,s)=\sigma'(0)+sv$, then
$\langle\frac{\partial\tau}{\partial t}(0,0),
\frac{\partial\tau}{\partial t}(0,s)\rangle$ is equal to the
corresponding expression in $\tilde{M}$.  But then our triangle
comparison assumption, in the form given in Proposition \ref
{prop:equivcomp}.3, and Lemma \ref{lem:hinge}.1 (Hinge) combine to give
         $|\sigma_0(t)\sigma_s(t)|
\le
|\tilde{\sigma}_0(t)\tilde{\sigma}_s(t)|.$
Since
$$|J(t)|=\lim_{s\to 0} |\sigma_0(t)\sigma_s(t)|/s,$$
and similarly in $\tilde{M}$, we conclude
$$ \langle J(t), J(t)\rangle \le \langle\tilde J(t), \tilde J(t)\rangle.$$

Now we calculate the third order Taylor expansion of $J$.
$$J'' = -R_{\sigma'J}\gamma', \quad J''(0) = 0,$$
$$J''' = -R'_{\gamma'J}\gamma' - R_{\gamma'J'}\gamma', \quad J'''(0)
= -R_{\gamma'(0)v}\gamma'(0),$$
and hence
$$J(t) = \mathcal{P}_t(vt - \frac{1}{6}R_{\gamma'(0)v}\gamma'(0)t^3 +
O(t^4)),$$
where $\mathcal{P}_t$ is parallel translation from $\gamma(0)$ to
$\gamma(t)$ and
the primes indicate $\nabla_{\gamma '(t)}$. Then we get an expansion
$$\langle J(t), J(t)\rangle
= \langle v, v\rangle t^2 -
\frac{1}{3}\langle R_{\gamma'(0)v}\gamma'(0), v\rangle t^4 + O(t^5),$$
and a similar expansion for $\langle\tilde J(t), \tilde J(t)\rangle$. Since the
$t^2$-terms are the same, we must have the inequality for the $t^4$-terms:
$$\langle R_{\gamma'(0)v}\gamma'(0), v\rangle
\ge \langle \tilde R_{\tilde\gamma'(0)\,\tilde v}\tilde\gamma'(0), v\rangle
= K\langle\gamma'(0), \gamma'(0)\rangle\langle v, v\rangle.$$
Since $\gamma'(0)$ and $v$ span an arbitrary nonnull section, $R \ge K$
follows.
\end{proof}


\section{Algebraic meaning of curvature bounds}
\label{sec:algebra}

Curvature bounds of the type studied in this paper are clarified by
the  analysis by Beem and Parker of the pointwise ranges of sectional
curvature \cite{BP}, as we now explain.  We go further, to relate our
curvature bounds to the ``null'' curvature bounds considered by
Uhlenbeck \cite{U} and Harris \cite{H1}.

Since in a semi-Riemannian manifold with indefinite metric, a
spacelike section always lies in a Lorentz or anti-Lorentz $3$-plane
$V$,  the range of sectional curvature may be studied by restricting
to such
$3$-planes $V$. On $V$, unless the curvature is constant, both the time-like
and space-like sections have
infinite intervals as their range, and either both are the entire real line
or both are rays which overlap in at most a common end (see
Theorem \ref{thm:Kvalues}). Then as we
vary $V$  in the tangent bundle, either the
separation between the two rays can be lost or we can have
numbers that separate all pairs of intervals, namely, a curvature
bound in our sense.

In this section, $V$ always denotes a
Lorentz or anti-Lorentz $3$-plane. Following \cite{BP}, consider a curvature
tensor $R$ on $V$.  Express $R$ as a homogeneous
quadratic form $v\wedge w \mapsto
\mathcal{Q}_1(v\wedge w) =R(v,w,v,w)$ on $\bigwedge^2V$.  If
$(e_1, e_2, e_3)$ is a frame for which $e_2$ and $e_3$ have the
same signature, then $(e_1\wedge e_2, e_1\wedge e_3, e_2\wedge e_3)$
is a frame for $\bigwedge^2V$ with signature $(-,-,+)$ with respect to
the natural extension of the inner product. Every nonzero element
$x_1e_1\wedge e_2+ x_2e_1\wedge e_3+ x_3e_2\wedge e_3$ of
$\bigwedge^2V$ is decomposable, and so represents a oriented section
of $V$, so the projective plane $\p^2$ of all nonorientable sections of
$V$ has homogeneous coordinates $x_1, x_2, x_3$. The inner product
quadratic form on $\bigwedge^2V$ has the coordinate expression
$\mathcal{Q}_2 = \langle v,v \rangle = (x_3)^2 - (x_1)^2 -(x_2)^2$, and the
sectional curvature function is $\mathcal{K} = \mathcal{Q}_1/\mathcal{Q}_2$. We
also identify $\mathcal{Q}_1$
and $\mathcal{Q}_2$ with the quadratic functions on $\p^2 - \{\ell_\infty\}$
given in terms of the corresponding nonhomogeneous coordinates
$x = x_1/x_3, \quad y = x_2/x_3$ by
$\mathcal{Q}_1/(x_3)^2$ and  $\mathcal{Q}_2/(x_3)^2 = 1 - x^2 - y^2$. For
various curvature
tensors there is no restriction on $\mathcal{Q}_1$;  that is,  for a
given point $p$ in
any $n$-dimensional manifold $M$, and a given $3$-dimensional subspace $V$
of $T_pM$,  a semi-Riemannian metric with indefinite restriction to $V$
can be specified in a neighborhood of $p$ in terms of normal coordinates
so as to realize any curvature tensor on $V$.

The {\em null conic} $N$ is given by $\mathcal{Q}_2 = 0$, and
represents those sections
of $V$ on which the inner product is degenerate and $\mathcal{K} =
\mathcal{Q}_1/\mathcal{Q}_2$ is undefined.
The {\em homaloidal (flat) conic} $H$ is given by $\mathcal{Q}_1 = 0$.
The inclusion $N
\subset H$ is equivalent to $\mathcal{K}$ being constant on the sections of
$V$, which is to say, $\mathcal{Q}_1$ being $\mathcal{Q}_2$
multiplied by that constant value (which may be $0$ so the inclusion
could be proper). Otherwise, $H$ and $N$ intersect in at most $4$
points, counting multiplicities.  The points of odd multiplicity are
precisely the points where $H$
and $N$ cross.

Since the interior and exterior of $N$ are connected sets on which
$\mathcal{K}$ is continuous, the ranges of $\mathcal{K}$ on
time-like sections and
space-like sections of $V$ are intervals, $I_{ti}$ and $I_{sp}$.
The following theorem characterizes the possible
ranges.  It implies, in particular, that if on $V$ either timelike
or spacelike curvatures are bounded, then both are, and there
exists a curvature bound in our sense.

\begin{thm}[\cite{BP}]\label{thm:Kvalues}
For a curvature tensor on a Lorentz or anti-Lorentz $3$-plane:
\begin{list}
{\emph{\arabic{step}.}}
{\usecounter{step}
\setlength{\rightmargin}{7mm}\setlength{\leftmargin}{7mm}}

\item $\mathcal{K}$ is constant if $N \subset H$.
\item $I_{sp} = I_{ti} = \R$ if $H$ and $N$ cross.
\item $I_{sp}$ and $I_{ti}$ are oppositely directed closed half-lines,
separated by a nontrivial open interval of curvature bounds, if $H$ does not
intersect $N$ (including the cases when $H$ is empty or a point not in $N$).
\item $I_{sp}$ and $I_{ti}$ are oppositely directed half-lines with a
common endpoint otherwise, namely, when $H$ and $N$ have a point of
tangency but never cross.   More specifically, $I_{sp}$ and $I_{ti}$
are both open, both closed, or complementary, according as $H$ and
$N$ intersect in a single point of order $2$, two points of order
$2$, or a single point of order $4$.
\end{list}
\end{thm}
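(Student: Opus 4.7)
The plan is to study the sectional curvature $\mathcal{K} = \mathcal{Q}_1/\mathcal{Q}_2$ as a rational function on $\p^2$ whose level sets are the members $H_c := \{\mathcal{Q}_1 - c\mathcal{Q}_2 = 0\}$ of the pencil generated by $H$ and $N$. The complement $\p^2 \setminus N$ has two connected components, corresponding to the timelike and spacelike sections of $V$; on each, $\mathcal{K}$ is continuous and its range is therefore an interval, equal to the set of $c$ for which $H_c$ meets the component. The problem thus reduces to understanding how the pencil $\{H_c\}$ fills out each side of $N$, which is controlled by the base locus $H \cap N$.

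Part 1 is immediate: since $N$ is an irreducible conic, any quadratic form vanishing on $N$ is a scalar multiple of $\mathcal{Q}_2$, so $N \subset H$ forces $\mathcal{Q}_1 = c\mathcal{Q}_2$ and $\mathcal{K} \equiv c$. For Part 2, at a transverse crossing $p \in H \cap N$ the gradients of $\mathcal{Q}_1$ and $\mathcal{Q}_2$ at $p$ are linearly independent, so in a punctured neighborhood of $p$ the ratio $\mathcal{Q}_1/\mathcal{Q}_2$ takes every real value along some direction approaching $p$ within each side of $N$; hence $I_{sp} = I_{ti} = \R$.

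For Part 3, assuming $H \cap N = \emptyset$, the form $\mathcal{Q}_1$ is nowhere zero on $N$ and has constant sign there, so $\mathcal{K} \to \pm \infty$ as one approaches $N$ from each component, with opposite signs on the two components. By compactness of the closure of each component in $\p^2$, $\mathcal{K}$ attains a finite extremum in the interior, and the two ranges are therefore oppositely directed closed half-lines. Their gap is a nontrivial open interval of curvature bounds, because any value $c$ lying in both closures would make $H_c$ meet both components and hence cross $N$, contradicting that every member of the pencil shares the empty base locus.

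In Part 4, $H$ and $N$ meet only with even multiplicity. The sign of $\mathcal{Q}_1$ on $N$ still does not change through a tangency point (a tangency is a local extremum of $\mathcal{Q}_1|_N$, not a sign change), so $\mathcal{K}$ still tends to $\pm \infty$ on each side of $N$ and the half-line structure persists; now, however, the two half-lines share a common endpoint $c_0$ whose level conic $H_{c_0}$ is tangent to $N$ along the full base locus. Whether $c_0$ is attained on a given side is determined by which side $H_{c_0}$ lies on locally at each base point. The main obstacle will be the enumeration in Part 4: using a local normal form for the pair $(\mathcal{Q}_1 - c_0 \mathcal{Q}_2, \mathcal{Q}_2)$ at each base point, one must verify for each of the three configurations (a single point of order $2$, two points of order $2$, a single point of order $4$) on which side $H_{c_0}$ sits locally, yielding respectively open, closed, or complementary half-line endpoints. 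This is elementary but requires careful case bookkeeping.
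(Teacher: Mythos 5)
First, note that the paper offers no proof of this theorem to compare against: it is imported verbatim from Beem--Parker \cite{BP}. Judged on its own, your framework --- reading $\mathcal{K}=\mathcal{Q}_1/\mathcal{Q}_2$ through the pencil $H_c=\{\mathcal{Q}_1-c\mathcal{Q}_2=0\}$ with base locus $H\cap N$, and using connectedness of the two components of $\p^2\setminus N$ and of real conic loci --- is the right one, and Parts 1 and 3 are essentially complete. (In Part 3 the step ``meets both components and hence crosses $N$'' silently uses that every nonempty real conic locus in $\p^2$ is connected; true, but it should be said.) Part 2 has a small gap: ``cross'' means odd intersection multiplicity, which includes multiplicity $3$, where $H$ and $N$ are tangent and your gradient-independence argument does not apply. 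The fix is to argue from the sign change of $\mathcal{Q}_1|_N$ at any odd-order zero: near such a point each side of $N$ contains points with $\mathcal{Q}_1$ of either sign while $\mathcal{Q}_2\to 0$ with fixed sign, so $\mathcal{K}$ is unbounded above and below on each component.

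The genuine gap is Part 4, which you outline but do not prove, and three of its structural claims are asserted rather than established. (i) Boundedness: unlike Part 3, $\mathcal{Q}_1$ is not bounded away from $0$ on $N$, so the compactness argument for a finite extremum fails; the infimum of $\mathcal{K}$ on the disk may be approached only at a boundary tangency point and never attained (this is precisely what produces the ``open'' cases). One must show by a local computation at each base point (write $\mathcal{Q}_2=we$ and $\mathcal{Q}_1=u^{2k}f+wg$ with $e,f$ nonvanishing, $w>0$ the spacelike side) that $\mathcal{K}\ge g/e$ there, hence is bounded below near the tangency. (ii) The common endpoint: that the two half-lines abut at a single $c_0$, rather than being separated by a gap as in Part 3 or overlapping in an interval, is not addressed; for instance, one can note that $\R\setminus(I_{sp}\cup I_{ti})$ consists of those $c$ for which the real locus of $H_c$ reduces to the base points, and there are at most finitely many such $c$ since they index degenerate members of the pencil. (iii) Your description of $H_{c_0}$ as ``tangent to $N$ along the full base locus'' does not single it out: \emph{every} member of the pencil contains the base scheme and is tangent to $N$ there. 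What distinguishes $c_0$ is that $H_{c_0}$ is the member \emph{singular} at the base points (a double line or a point-conic in concrete examples), and the open/closed/complementary trichotomy is read off from whether its real locus meets the open disk, the open exterior, both, or neither. Until (i)--(iii) and the three-case bookkeeping are carried out, Part 4 --- the substantive content of the theorem --- remains unproven.
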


In  a semi-Riemannian manifold with indefinite metric,
$R\ge K$ holds if and only if the restriction of the curvature tensor
to  each Lorentz or anti-Lorentz $3$-plane $V$ satisfies $R\ge K$
(and similarly for $R\le K$).
Equivalently, on each $V$, either $\mathcal{K}$ is constantly $K$, or $I_{ti}$
is a semi-infinite interval in
$(-\infty, K]$ and $I_{sp}$ is a semi-infinite interval in
$[K,\infty)$.  Theorem \ref{thm:Kvalues} leads us to
consider a weaker
condition, which we denote by $R_V\ge K(V)$, in which
the interval betweeen $I_{ti}$ and $I_{sp}$  varies with the
indefinite $3$-plane $V$, and
there may be no $K$ common to all.

Write $ R_{null} \ge 0 $ if
$R(v,x,v,x)\ge 0$ for any null vector $x$ and non-zero vector $v$
perpendicular to $x$.  It is shown in \cite[Proposition 2.3]{H1} (or
see \cite[Proposition A.7]{BEE})  that if $R_{null}>0\,
(<0)$ at a point, then the range of timelike sectional
curvatures at that point is unbounded below (above).  The following
proposition gives precise information.

\begin{prop}\label{prop:nullcurv}
A semi-Riemannian manifold with indefinite metric satisfies
$R_{null}\ge 0$ if and only if $R_V\ge K(V)$, and similarly with
signs reversed.
\end{prop}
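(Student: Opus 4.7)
The plan is to connect the two conditions through the algebraic setup developed for Theorem~\ref{thm:Kvalues}. The key observation is that on an indefinite $3$-plane $V$, the bound $R_V \ge K$ can be written as the single polynomial inequality $\mathcal{Q}_1 - K\mathcal{Q}_2 \ge 0$ on $\bigwedge^2 V$: on a spacelike section ($\mathcal{Q}_2 > 0$) this reads $\mathcal{K} \ge K$, on a timelike one ($\mathcal{Q}_2 < 0$) it reads $\mathcal{K} \le K$, and on the null conic $N(V)=\{\mathcal{Q}_2=0\}$ it becomes $\mathcal{Q}_1 \ge 0$. A degenerate section is precisely one of the form $\mathrm{span}(x,v)$ with $x$ null and $v\perp x$, and $\mathcal{Q}_1$ restricted to this section equals $R(v,x,v,x)$, so the restriction of $\mathcal{Q}_1$ to $N(V)$ encodes exactly the quantity appearing in $R_{\text{null}}$.

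For the direction $R_V \ge K(V) \Rightarrow R_{\text{null}} \ge 0$, take a null $x$ and a nonzero $v\perp x$. If $v$ is parallel to $x$, antisymmetry gives $R(v,x,v,x)=0$. If $v$ is non-null and independent of $x$, then adjoining any $w$ with $\langle w,x\rangle \ne 0$ yields a $3$-plane $V=\mathrm{span}(x,v,w)$ whose Gram matrix has determinant $-\langle v,v\rangle\langle x,w\rangle^2\ne 0$; hence $V$ is non-degenerate, and containing the null vector $x$ forces $V$ to be indefinite. Then $\mathrm{span}(x,v)\in N(V)$ and the inequality above specializes to $R(v,x,v,x)\ge 0$. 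The remaining case where $\mathrm{span}(x,v)$ is totally null (possible only when the index is at least $2$) is handled by continuity: since $v\notin \R x$, the linear functional $u\mapsto\langle v,u\rangle$ on $x^\perp$ is not identically zero, so pick $u\in x^\perp$ with $\langle v,u\rangle\ne 0$; then $v+\epsilon u\in x^\perp$ is non-null and independent of $x$ for all small $\epsilon\ne 0$, the previous case applies, and taking $\epsilon\to 0$ yields $R(v,x,v,x)\ge 0$.

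For the converse $R_{\text{null}} \ge 0 \Rightarrow R_V \ge K(V)$, fix any indefinite $3$-plane $V$; the hypothesis gives $\mathcal{Q}_1 \ge 0$ on $N(V)$. Running through the cases of Theorem~\ref{thm:Kvalues}: case~2 (where $H$ crosses $N$) is excluded, since a crossing would force $\mathcal{Q}_1$ to change sign along $N$ and hence take negative values; in cases~1, 3, 4 a separating $K(V)$ exists. The orientation is forced because if $\mathcal{Q}_1(P_0)>0$ at some $P_0\in N$, then $\mathcal{K}=\mathcal{Q}_1/\mathcal{Q}_2$ tends to $+\infty$ on the spacelike side of $P_0$ and to $-\infty$ on the timelike side, so $I_{sp}$ is a right half-line and $I_{ti}$ a left half-line; if instead $\mathcal{Q}_1\equiv 0$ on $N$, we are in case~1 with $\mathcal{K}$ constant, and that constant serves as $K(V)$.

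The reverse-sign statement follows by applying the proven equivalence to the metric $-g$, under which null vectors remain null, causal characters of sections swap, and sectional curvatures negate; the condition $R_{\text{null}}\ge 0$ for $-g$ translates back to $R_{\text{null}}\le 0$ for $g$, and $R_V\ge K(V)$ for $-g$ to $R_V\le -K(V)$ for $g$. The main obstacle is the totally null $2$-plane case in the forward direction, since such a plane cannot be embedded in any non-degenerate $3$-plane; the continuity/perturbation argument described above is what closes that gap.
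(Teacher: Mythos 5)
Your proof is correct and its core follows the same route as the paper's: identify $R_{null}\ge 0$ on an indefinite $3$-plane $V$ with $\mathcal{Q}_1\ge 0$ on the null conic $N$, observe that this excludes exactly case 2 of Theorem \ref{thm:Kvalues}, and use the sign of $\mathcal{Q}_1$ at points of $N$ to orient the half-lines $I_{sp}$ and $I_{ti}$ so that the separating constant is a bound below rather than above. Where you go beyond the paper is in the direction $R_V\ge K(V)\Rightarrow R_{null}\ge 0$: the paper argues entirely inside a fixed $V$ and leaves implicit that every pair $(x,v)$ with $x$ null and $v\perp x$ arises as a degenerate section of some nondegenerate indefinite $3$-plane. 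Your explicit construction of such a $3$-plane $\mathrm{span}(x,v,w)$ via the Gram determinant $-\langle v,v\rangle\langle x,w\rangle^2$, and your perturbation argument for totally null planes $\mathrm{span}(x,v)$ (which occur only when the index is at least $2$ and lie in no nondegenerate $3$-plane at all), close a genuine small gap, and this is worth having. One cosmetic slip: under $g\mapsto -g$ the causal characters of sections do not swap in the paper's convention (a definite section stays definite, an indefinite one stays indefinite); what changes sign is $\mathcal{K}$, which is all your sign-reversal step actually uses, so its conclusion stands as written.
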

\begin{proof}
In a given Lorentz or anti-Lorentz $3$-plane $V$, the condition
$R_{null} \ge 0$ is
equivalent to $\mathcal{Q}_1\ge 0$ on the null  conic $N$.
In turn this implies
that $N$ and $H$ do not cross, and hence cases 1, 3 or 4 of Theorem
\ref{thm:Kvalues} hold.  In case 1, obviously there is a lower
curvature bound. In cases 3 and 4, there are points of $N$ at which
$\mathcal{Q}_1>0$.  Approaching  $N$ from the spacelike side gives
$R\to\infty$, so  $I_{sp}$ is unbounded above and again $V$ has a
lower curvature bound.

Conversely, suppose there is a lower curvature bound for $V$, so case
2 is ruled out. In case 1, $\mathcal{Q}_1=0$ on $N$.  In cases 3 or 4, since
$I_{sp}$ is bounded below, there cannot be points of $N$ at which
$\mathcal{Q}_1< 0$. \end{proof}

The condition $R_{null}\le 0$ plus a ``growth
condition'' was used in \cite{U} to prove a Hadamard-Cartan theorem for
Lorentz manifolds.  It seems  interesting to investigate the
relation between $R\le 0$ and these hypotheses;  Uhlenbeck
comments about the growth condition,``it is to be hoped that a
similar condition that does not depend on coordinates can be found''
\cite[p. 75]{U}.

The condition $R_{null}> 0$
(or $< 0$) isolates case 3 of Theorem \ref{thm:Kvalues}.  Now let us show
how a strengthening of this condition bounds below
the length of the interval of curvature bounds in each
Lorentz or anti-Lorentz $3$-plane $V$.

While sectional curvature is
undefined for null sections, Harris has used a substitute, relative
to a choice of null vector $x$. Namely, for  a null section
$\Pi$ containing $x$, define  the \emph{null curvature of $\Pi$ with
respect to $x$} by
\begin{equation}
\mathcal{K}_x(\Pi) =R(w,x,w,x)/ \langle w,w\rangle
\end{equation}
for any non-null vector $w$ in $\Pi$  \cite{H1}.
While there is no \emph{a priori} way to normalize the null vector $x$,
it is still possible to strengthen Proposition
\ref{prop:nullcurv}.  This is because, in the presence of an interval
of curvature bounds larger than a single point, the algebra of the
curvature operator $\mathcal{R}:\bigwedge^2V\to\bigwedge^2V$ selects a
distinguished timelike unit vector $t$, or ``observer'', and hence a
distinguished circle of null vectors $x$.

In the following proposition, we suppose $V$ is Lorentz (that is,
has signature $(+,+,-)$).  There are obvious sign changes if
$-V$ is Lorentz.

\begin{prop} Suppose there is an interval $[K_1,K_2]$ of curvature bounds
below on the Lorentz
$3$-plane $V$, where $K_1<K_2$.  Then $\mathcal{R}$ is
diagonalizable.  Let $t$ be a
unit timelike vector perpendicular to the spacelike
eigenbivector of $\mathcal{R}$. Then
\begin{equation}
\label{eq:bdinterval}
K_2 - K_1 = \min_v K_x(\Pi),
\end{equation}
where $v$ runs over unit vectors perpendicular to $t$, and $x$ and
$\Pi$ are the null vector and null section $x=t+v$ and $\Pi=x^\perp$
respectively.  For curvature bounds above, substitute
\begin{equation}
K_1- K_2 = \max_v K_x(\Pi)
\end{equation}
for (\ref{eq:bdinterval}).
\end{prop}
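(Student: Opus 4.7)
The plan is to recast the hypothesis as positive semi-definiteness of the family of quadratic forms $\mathcal{Q}_K := \mathcal{Q}_1 - K\mathcal{Q}_2$ on the three-dimensional semi-Euclidean space $\bigwedge^2 V$ of signature $(-,-,+)$, and then to read off the geometry from the eigendata of the curvature operator $\mathcal{R}$. Because every bivector in a three-dimensional space is decomposable, the condition that $K$ is a lower curvature bound on $V$ translates precisely to $\mathcal{Q}_K \ge 0$ on all of $\bigwedge^2 V$.

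For diagonalizability, I rule out Petrov-Segre types II, III, IV by direct computation. In type II, in a null basis $(\ell_+, \ell_-, e)$ with $\mathcal{R}\ell_+ = \lambda\ell_+$, $\mathcal{R}\ell_- = \lambda\ell_- + c\ell_+$ ($c\ne 0$) and $\mathcal{R} e = \mu e$, the matrix of $\mathcal{Q}_K$ in the coordinates $(a,b,f)$ for $a\ell_+ + b\ell_- + fe$ has an upper-left $2\times 2$ principal minor of determinant $-(\lambda - K)^2$; semi-definiteness therefore forces $K = \lambda$, so only one value of $K$ works, contradicting $K_1 < K_2$. Analogous calculations in the canonical forms for types III and IV produce a principal $2\times 2$ minor of strictly negative determinant for every $K$. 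Hence $\mathcal{R}$ must be type I; pick pairwise orthogonal eigenbivectors $\omega_s, \omega_1, \omega_2$ with real eigenvalues $\lambda_s, \lambda_1, \lambda_2$. Sylvester's law forces exactly one eigenbivector (label it $\omega_s$) to be spacelike and the other two to be timelike, and in this diagonal basis
$$\mathcal{Q}_K = (\lambda_s - K)a^2 + (K - \lambda_1)b^2 + (K - \lambda_2)c^2,$$
so $[K_1, K_2] = [\max(\lambda_1, \lambda_2),\, \lambda_s]$.

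Finally, $\omega_s$, being decomposable and spacelike, represents a spacelike $2$-plane $\Pi_s \subset V$; its orthogonal line in $V$ is timelike, and I take $t$ to be the unit vector along it demanded by the statement. Because $\omega_s^\perp \subset \bigwedge^2 V$ equals $\{t \wedge w : w \in \Pi_s\}$, I may choose an orthonormal basis $e_1, e_2$ of $\Pi_s = t^\perp$ so that $\omega_i = t \wedge e_i$, which makes $\lambda_s, \lambda_1, \lambda_2$ the sectional curvatures of $(e_1, e_2)$, $(t, e_1)$, $(t, e_2)$. For a unit vector $v = \cos\theta\, e_1 + \sin\theta\, e_2$, set $x = t + v$ (null) and $u = -\sin\theta\, e_1 + \cos\theta\, e_2$; then $u$ is unit spacelike with $\langle u, x\rangle = 0$, so $\Pi = x^\perp = \mathrm{span}\{x, u\}$. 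Expanding
$$u \wedge x = \sin\theta\,(t \wedge e_1) - \cos\theta\,(t \wedge e_2) - e_1 \wedge e_2$$
in the eigenbasis of $\mathcal{R}$ yields
$$\mathcal{K}_x(\Pi) = R(u, x, u, x) = \lambda_s - \lambda_1 \sin^2\theta - \lambda_2 \cos^2\theta,$$
whose minimum over $v$ equals $\lambda_s - \max(\lambda_1, \lambda_2) = K_2 - K_1$. The upper-bound version follows by applying this to $-\mathcal{R}$. The main obstacle is the Segre-type case analysis for diagonalizability; once type I is established, the rest is a routine eigenvalue computation.
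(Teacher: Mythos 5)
Your proof is correct, and its second half (choosing $t$ normal to the spacelike eigensection, expanding $u\wedge x$ in the eigenbasis, and minimizing $\lambda_s-\lambda_1\sin^2\theta-\lambda_2\cos^2\theta$) is essentially the computation the paper does. Where you genuinely diverge is in establishing diagonalizability of $\mathcal{R}$. The paper argues variationally: the hypothesis puts $V$ in case 3 of Theorem \ref{thm:Kvalues}, so the extremal sectional curvatures $K_1$ (max timelike) and $K_2$ (min spacelike) are attained; extrema of $\mathcal{Q}_1$ on the unit bivectors are critical points, hence eigenbivectors by Lagrange multipliers, and since $K_1\ne K_2$ these are two orthogonal eigenbivectors spanning a nondegenerate plane whose orthogonal line supplies the third. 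You instead reformulate the hypothesis as positive semi-definiteness of $\mathcal{Q}_K=\mathcal{Q}_1-K\mathcal{Q}_2$ on all of $\bigwedge^2V$ (legitimate, since every bivector in dimension $3$ is decomposable) and kill the non-diagonalizable Segre types by exhibiting a negative principal $2\times2$ minor; your type-$\{21\}$ computation correctly shows that a Jordan block permits at most the single bound $K=\lambda$. Your route is more computational but also more self-contained: it does not lean on Theorem \ref{thm:Kvalues} to guarantee that the extremal curvatures are attained on the noncompact unit hyperboloids, and it delivers the identification $[K_1,K_2]=[\max(\lambda_1,\lambda_2),\lambda_s]$ as a byproduct, whereas the paper's argument is shorter once that theorem is in hand. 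Your reduction of the upper-bound case to $-\mathcal{R}$ also matches the paper's sign-reversal convention.
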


\begin{proof}  We consider the case of curvature bounds
below.  First observe that, while self-adjoint linear operators in
indefinite inner product spaces are not always diagonalizable, our
hypotheses imply diagonalizability. Indeed, the unit eigenbivectors
of $\mathcal{R}$, of which one is spacelike and two are timelike, are
the critical points of the corresponding quadratic form on unit
bivectors.  The values of this quadratic form are sectional
curvatures, up to sign.  Therefore $K_2$, the minimum spacelike
sectional curvature, and $K_1$, the maximum timelike sectional
curvature, are eigenvalues, which are distinct by hypothesis.  The
corresponding eigenbivectors span a nondegenerate $2$-dimensional
subspace of $\bigwedge^2V$;  a bivector perpendicular to both is an
eigenbivector by self-adjointness.  Thus our eigenbivectors
diagonalize $\mathcal{R}$.  Let $t, v_1, v_2$ be a frame of vectors
perpendicular to the eigensections, so that $t\wedge v_1$ and
$t\wedge v_2$ are the timelike eigenbivectors.  Then the null vectors
$x=t+v$ have the form $t+\cos\theta v_1 + \sin\theta v_2$, and
the null curvatures $\mathcal{K}_x(\Pi)$  have the form  $K_2 -
K_1\cos^2\theta - K_3\sin^2\theta$ where $K_3\le K_1$.  Thus the
minimum is $K_2-K_1$.
\end{proof}

\section{Warped product examples}\label{sec:examples}

If $B$ and $F$ are Riemannian manifolds, $(-B)\times_fF$ will denote
the product manifold with the warped product metric $\langle
\,,\,\rangle = -ds^2_B +f^2
ds^2_F$. The sectional
curvature $\mathcal{K}$ of $(-B)\times_fF$, in terms of the sectional
curvatures $\mathcal{K}_B$ and $\mathcal{K}_F$, may be calculated for a
frame $x+v, y+w$, for $x,y \in T_pB$ and $v,w \in T_{\overline{p}}F$.
Without loss of generality, suppose $\langle x,y\rangle = \langle
v,w\rangle =0$.
Let $G$ be the gradient of $f$. Then
\begin{align*}
\mathcal{K}((x+v)&\wedge (y+w)) = -\mathcal{K}_B(x\wedge y) <x,x><y,y>\\
&- f^{-1}(p)\left[<w,w>
\nabla^2f(x,x)+ <v,v> \nabla^2f(y,y)\right]\\
&+f^{-2}(p)\left[\mathcal{K}_F(v\wedge w) - <G(p),G(p)>\right] <v,v> <w,w>.
\end{align*}
Therefore:

\begin{prop} \label{prop:wp} 
Consider Riemannian manifolds $B$ and $F$, and a smooth function
$f:B\to \R _{>0}$.  Then $(-B)\times _f F$ is a semi-Riemannian
manifold satisfying $R\ge K$ ($R\le K$) if and only if the
following three conditions hold:
\begin{list}
{\emph{\arabic{step}.}}
{\usecounter{step}
\setlength{\rightmargin}{7mm}\setlength{\leftmargin}{7mm}}

\item  $f$ is $(-K)$-concave\,\, ($(-K)$-convex).
\item  $\dim B = 1$ or $B$ has sectional curvature $\le -K$ ($\ge K$),
\item  $\dim F= 1$, or for all points $(p,\overline{p})$ and
$2$-planes $\Pi_{\overline{p}}$ tangent to $F$,

$\mathcal{K}_F(\Pi _{\overline{p}})\ge$ ($\le$) $ Kf(p)^2 + <G(p),G(p)>$.
\end{list}
\end{prop}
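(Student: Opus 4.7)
The plan is to derive the proposition by direct computation from the curvature formula displayed just above it. Applying the defining inequality~(\ref{eq:def}) to $X = x+v$ and $Y = y+w$ in the splitting $T_{(p,\overline{p})}M = T_pB \oplus T_{\overline{p}}F$, the condition $R \ge K$ becomes
\begin{equation*}
R(X,Y,X,Y) \;\ge\; K\bigl(\langle X,X\rangle\langle Y,Y\rangle - \langle X,Y\rangle^2\bigr).
\end{equation*}
Both sides are invariant under change of basis of the $2$-plane spanned by $X$ and $Y$, so I would first reduce to bases satisfying both $\langle x,y\rangle_B = 0$ and $\langle v,w\rangle_F = 0$ simultaneously. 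When the $2$-plane projects onto $2$-dimensional subspaces of both $T_pB$ and $T_{\overline{p}}F$, such a basis exists by simultaneous diagonalization of two positive-definite forms on $\mathbb{R}^2$; in the remaining cases (the $2$-plane lying in $T_pB$, in $T_{\overline{p}}F$, or as the direct sum of a $B$-line and an $F$-line) one writes down the basis by inspection.

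With this reduction in place, the warped-product identities $\langle x+v, x+v\rangle_M = -\langle x,x\rangle_B + f^2\langle v,v\rangle_F$ and $\langle x+v, y+w\rangle_M = 0$ let me compute the Gram determinant and combine with the given formula for $R(X,Y,X,Y)$. Writing $a = \langle x,x\rangle_B$, $b = \langle y,y\rangle_B$, $p = \langle v,v\rangle_F$, $q = \langle w,w\rangle_F$, the excess $D := R(X,Y,X,Y) - K\bigl(\langle X,X\rangle\langle Y,Y\rangle - \langle X,Y\rangle^2\bigr)$ regroups into exactly four summands,
\begin{align*}
D = \,& -\bigl(\mathcal{K}_B(x\wedge y) + K\bigr)\,ab\\
&{}+ fq\bigl[Kf a - \nabla^2 f(x,x)\bigr]
     + fp\bigl[Kf b - \nabla^2 f(y,y)\bigr]\\
&{}+ f^2 pq\bigl[\mathcal{K}_F(v\wedge w) - \langle G,G\rangle - Kf^2\bigr].
\end{align*}
Since $a,b,p,q \ge 0$ and $f > 0$, each summand is manifestly non-negative under exactly one of the three conditions hypothesized for $R \ge K$: the first under condition~2, the middle pair under condition~1 (which unwinds via the definition of $(-K)$-concavity to $\nabla^2 f(x,x) \le Kf\langle x,x\rangle_B$), and the last under condition~3. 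Hence the three conditions together imply $D \ge 0$ for every $2$-plane, i.e., $R \ge K$.

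For the converse, I would specialize the $2$-plane to isolate each summand in turn. A pure-$B$ plane ($p=q=0$) retains only the first summand and forces $\mathcal{K}_B \le -K$ on every $2$-plane of $T_pB$; a pure-$F$ plane ($a=b=0$) retains only the last and forces condition~3; a mixed plane spanned by one $B$-direction and one $F$-direction (say $b=p=0$) retains one middle summand and forces $\nabla^2 f(x,x) \le Kf\langle x,x\rangle_B$ for all $x$, i.e., condition~1. In the edge cases $\dim B = 1$ or $\dim F = 1$, no pure-$B$ or pure-$F$ plane exists; correspondingly, the summand indexed by that condition contains a vanishing factor on every admissible $(x,y,v,w)$, so the condition is vacuously satisfied. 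The $R \le K$ case is identical up to reversing inequalities throughout.

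The main obstacle I anticipate is the opening orthogonalization step, which must be verified case by case over all degenerate configurations of the $2$-plane, including those that touch the null cone of the Lorentz warped product. Once that reduction is secure, the remaining work is the regrouping above, where the three conditions line up one-to-one with the three non-trivial types of $2$-plane in the product and the signs of the factors $f, a, b, p, q$ make each summand individually tractable.
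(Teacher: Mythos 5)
Your proposal is correct and follows essentially the same route as the paper, which simply displays the warped-product curvature formula for an orthogonalized frame $x+v,\,y+w$ and asserts the proposition as an immediate consequence ("Therefore:"); your regrouping of $R(X,Y,X,Y)-K\bigl(\langle X,X\rangle\langle Y,Y\rangle-\langle X,Y\rangle^2\bigr)$ into three nonnegative summands, one per condition, together with the specializations for the converse, is exactly the computation the paper leaves implicit. The orthogonalization step you flag is routine (two positive semi-definite forms on a $2$-plane can always be simultaneously diagonalized, and in any case nondegenerate configurations are dense and both sides are continuous), so there is no gap.
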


Taking $B$ to be an interval $I$ in Proposition \ref{prop:wp}, we
easily construct a rich class of Lorentz examples:

\begin{cor}\label{cor:wp}
If $f:I\to\R$ is $(-K)$-concave and $F$ is a Riemannian manifold of
sectional curvature $\ge C$, then $(-I)\times _f F$ satisfies $R\ge
K$ for any $K$ in the interval
\begin{equation}
\left[\sup\frac{f''}{f},\,\inf \frac{C+(f')^2}{f^2}\right].
\end{equation}

If $f:I\to\R$ is $(-K)$-convex and $F$ is a Riemannian manifold of
sectional curvature $\le C$, then $(-I)\times _f F$ satisfies $R\le
K$ for any $K$ in the interval
\begin{equation}
\left[\sup \frac{C+(f')^2}{f^2},\,\inf \frac{f''}{f}\right].
\end{equation}
\end{cor}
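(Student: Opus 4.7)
The plan is a direct specialization of Proposition \ref{prop:wp} with $B$ replaced by the one-dimensional Riemannian manifold $I$; each of the three conditions there must be verified for every $K$ in the stated interval.

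Condition 2 is automatic since $\dim B = 1$. For condition 1, I would parametrize a geodesic in $I$ as $\gamma(s) = t_0 + cs$, so that $\langle \gamma',\gamma'\rangle = c^2$ and $(f\circ\gamma)''(s) = c^2 f''(\gamma(s))$. The $(-K)$-concavity inequality $(f\circ\gamma)'' - K\langle\gamma',\gamma'\rangle(f\circ\gamma) \le 0$ therefore reduces to $f'' \le Kf$ pointwise on $I$, which since $f>0$ is equivalent to $K \ge \sup_I f''/f$.

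For condition 3, the hypothesis $\mathcal{K}_F \ge C$ reduces the requirement $\mathcal{K}_F(\Pi_{\overline{p}}) \ge K f(p)^2 + \langle G(p),G(p)\rangle$ to the uniform pointwise inequality $C \ge K f(p)^2 + \langle G(p),G(p)\rangle$ for all $p \in I$. The one step that requires any care is the computation of $\langle G,G\rangle$, since the base metric is $-ds_B^2 = -dt^2$: solving $df(\cdot) = \langle \nabla f,\cdot\rangle$ in this negative-definite metric yields $G = -f'\partial_t$, whence $\langle G,G\rangle = -(f')^2$. The inequality then becomes $K f^2 \le C + (f')^2$, i.e., $K \le \inf_I (C+(f')^2)/f^2$.

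Combining the two bounds produces exactly the claimed interval. The $R \le K$ statement follows by the same argument after reversing $\ge$ to $\le$ throughout (and replacing ``concave'' with ``convex''), which flips both conditions 1 and 3 of Proposition \ref{prop:wp} while leaving the gradient computation unchanged. The only substantive point is the sign of $\langle G,G\rangle$ coming from the Lorentzian base; beyond that, the proof is mechanical bookkeeping.
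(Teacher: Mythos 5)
Your proposal is correct and is exactly the paper's route: the paper offers no separate argument beyond ``taking $B$ to be an interval $I$ in Proposition \ref{prop:wp},'' and you simply fill in the three conditions, correctly reducing $(-K)$-concavity on the Riemannian interval to $f''\le Kf$ and correctly getting $\langle G,G\rangle=-(f')^2$ from the negative-definite base metric (consistent with the fiber curvature $K_+=(C+(f')^2)/f^2$ in the Robertson--Walker example). Nothing is missing.
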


\begin{examp}
Following \cite{HE}, by a  \emph{Robertson-Walker space} we mean a
warped product $M=(-I)\times_f F$ where $F$ is $3$-dimensional
spherical, hyperbolic or Euclidean space, say with curvature $C$.
Then the sectional curvatures of sections containing
$\partial/\partial t$ are $K_-(t)=\frac{f''(t)}{f(t)}$, and those of
sections $\Pi$ tangent to the fiber are
$K_+(t)=\frac{C+f'(t)^2}{f(t)^2}$. By Corollary \ref{cor:wp},  $M$
satisfies $R\ge K$ if and only if $\sup K_- \le \inf K_+$.

It is easy to check that a Robertson-Walker space satisfies the
\emph{strong energy condition}, $\Ric(t,t)\ge 0$ for all timelike
vectors $t$, if and only if the curvature restricted to each tangent
$4$-plane has a nonpositive curvature bound below in our sense (see
\cite[Exercise 10, p. 362]{O'N}).

By  the Einstein equation, taking the cosmological constant
$\Lambda=0$, the stress-energy tensor of any Robertson-Walker space
has the form of a perfect fluid whose energy density $\rho$ and
pressure $p$ are functions of $t$ given by (see \cite[p. 346]{O'N}):
\begin{equation}\label{eq:RW}
8\pi\rho/3 = K_+, \hspace{.5in}
-4\pi(3p+\rho)/3=K_-.
\end{equation}

As discussed in  \cite[p. 348-350]{O'N}, the conditions $\rho >0$,
$\frac{-1}{3}<a\le \frac{p}{\rho}\le A$ for some constants $a$ and
$A$, and positive Hubble constant $H_0=\frac{f'}{f}(t_0)$ for some $t_0$,
correspond to an initial big bang singularity. Then $\rho <3a\rho\le
3p\le 3A\rho$, hence $0<(1+3a)\rho\le 3p+\rho$.  Therefore by
(\ref{eq:RW}), these big bang Robertson-Walker spaces all satisfy
$R\ge 0$.

Suppose the interval $I$ in these models is maximal.  If $C\le 0$,
then $I$ is semi-infinite and $\inf
\rho =0$, hence also $\inf p =0$,
so $0$ is the only curvature bound for the entire space. However,
every  point has a neighborhood which has an interval of
curvature bounds having $0$ as an interior point.
If $C>0$, then $f$ reaches a
maximum followed by a big crunch, and $K_+=\frac{C+(f')^2}{f^2}$
takes a positive minimum. Thus when $C>0$, the entire space has an
interval of curvature bounds with $0$ as an interior point.

Taking $\Lambda \ne 0$ here does not change the existence of
curvature bounds, but shifts them to the right by $\Lambda/3$.

In particular, a \emph{Friedmann model} is the special case in which
$\Lambda=0$ and
$p=0$.  Then one can solve explicitly for $f$, obtaining (see
\cite[p. 138]{HE}):
\begin{equation}
f=
\begin{cases}
\frac{ \mathcal{E} }{3} (\cosh\tau -1),\quad & t= \frac{\mathcal{E}
}{3} (\sinh\tau -
\tau), \text{ if } C=-1; \\
\tau ^2, \quad & t=\tau ^3/3, \text{ if } C=0;\\
-\frac{\mathcal{E} }{3} (1-\cos\tau), \quad & t =
-\frac{\mathcal{E} }{3} (\tau -
\sin\tau), \text{ if } C=1.
\end{cases}
\end{equation}
The  first two
of these solutions satisfy $R\ge 0$, and the third satisfies $R\ge K$
for all $K\in [-\frac{9}{8\mathcal{E} ^2}, \frac{9}{4\mathcal{E} ^2}]$.
\end{examp}

\begin{rem}
Vacuum spacetimes ($\Ric =0$) only have curvature bounds when they
are flat.  More generally, any $4$-dimensional Einstein Lorentz space
with a curvature bound has constant curvature, since perpendicular
sections always have the same curvature by a theorem of Thorpe
\cite{T}.
\end{rem}

\begin{examp}
We may also generate examples with higher index,
that is, higher-dimensional base. The following examples (a) and (b) of
curvature bounds for $(-B)\times_fF$ are from \cite{AH}:

(a) $R\ge K \,\,(\le K)$:  Take a Cartesian product $(-B)\times F$ (so $f=1$),
with sectional curvature$\le K$ in $B$ and $\ge K$ in $F$ \,\,(or the reverse).

(b) $R\ge 1 \,\,(\le 1)$: Take $B=H^k$, $f =
\cosh(\text{distance to a point})$, and $F$ of  sectional curvature
$\ge 1 \,\,(\le 1)$.

Note that to achieve $R\ge 1$ when $B$ is not $1$-dimensional, $B$
must have curvature $\le -1$.  Such a $B$ carries  many
$(-1)$-convex functions, but by Proposition \ref{prop:wp}, 
we need the warping function
$f$ on $B$ to be $(-1)$-concave.  A solution  is to take
$B=H^k$ and  $f$ to be  $(-1)$-affine. Example (b) fits this pattern, with
the righthand side of the inequality in Proposition \ref{prop:wp}.3
equal to $1$. Other  constructions in this pattern are:

(c) $R\ge 1 \,\,(\le  1)$:  Take $B=H^k$, $f=
\exp(\text{Busemann function})$, and $F$ of sectional curvature $\ge
0 \,\,(\le 0)$.

(d) $R\ge -1 \,\,(\le -1)$:  Take $B=S^k$, $f=\cos(\text{distance to
a point})$, and $F$ of  sectional curvature
$\ge -1 \,\,(\le -1)$.

Examples (a) - (d) are all geodesically complete.  Reversing  the
sign on an  example that satisfies $R\ge K$ and is negative definite
on the base, gives one that satisfies $R\le -K$ and is negative
definite on the fiber.
\end{examp}

\section*{Acknowledgments} We thank Yuri Burago for the picture that
triggered this project (\cite[p. 132]{BBI}).

\end{document}